\definecolor{colordelink}{rgb}{0,0,0.50}
\definecolor{colordecite}{rgb}{0,0.5,0}
\definecolor{colordeurl}{rgb}{0,0.41,0.5}
\crefname{assumption}{Assumption}{Assumptions}
\Crefname{assumption}{Assumption}{Assumptions}
\def\rhd{\operatorname{rhd}}
\def\C{\mathbb C}
\def\Z{\mathbb Z}
\def\rk{\operatorname{rk}}
\def\tr{\operatorname{tr}}
\def\rond{\mathaccent"7017} 
\newtheorem{theorem}{Theorem}[section]
\newtheorem{corollary}[theorem]{Corollary}
\newtheorem{lemma}[theorem]{Lemma}
\newtheorem{proposition}[theorem]{Proposition}
\theoremstyle{definition}
\newtheorem{definition}[theorem]{Definition}
\theoremstyle{definition}
\newtheorem{remark}[theorem]{Remark}
\theoremstyle{definition}
\newtheorem{example}[theorem]{Example}
\begin{document}

\title{Thom condition and Monodromy}

\author{R. Giménez Conejero, Lê D\~ung Tráng and
J.J.~Nu\~no-Ballesteros}

\address{
Alfr\'ed R\'enyi Institute of Mathematics, Re\'altanoda utca 13-15,
H-1053 Budapest, 
Hungary
}
\email{Roberto.Gimenez@uv.es}

\address{Professor emeritus, University of Aix-Marseille, France}
\email{ledt@ictp.it}

\address{Departament de Matemàtiques,
Universitat de Val\`encia, Campus de Burjassot, 46100 Burjassot
Spain  and Departamento de Matem\'atica, Universidade Federal da Para\'\i ba, 
CEP 58051--900, Jo\~ao Pessoa - PB, Brazil}
\email{Juan.Nuno@uv.es}

\thanks{The first named author has been partially supported by MCIU Grant FPU16/03844. The third 
named author has been partially supported by MICINN Grant PGC2018--094889--B--I00 and by GVA Grant AICO/2019/024}

\subjclass[2010]{Primary 32S40; Secondary 32S50, 32S55} \keywords{Monodromy, Milnor fibration, relative polar curves}

\begin{abstract} We give the definition of the Thom condition and we show that given any germ of 
complex analytic function $f\colon(X,x)\to(\C,0)$ on a complex 
analytic space $X$, there exists a geometric local monodromy without fixed points, provided that 
$f\in\mathfrak m_{X,x}^2$, where $\mathfrak m_{X,x}$ is the maximal ideal of $\mathcal O_{X,x}$. 
This result generalizes a well-known theorem of the second named author when $X$ is smooth and
proves a statement by Tibar in his PhD thesis. It also implies the A'Campo theorem that the Lefschetz number of the monodromy is equal to zero. 
Moreover, we give an application to the case that $X$ has maximal rectified homotopical depth at $x$ 
and show that a family of such functions with isolated critical points and constant total Milnor number 
has no coalescing of singularities.
\end{abstract}

\maketitle


\section*{Introduction}
\label{sec:0}

In \cite{Milnor1968} J. Milnor proved that for any germ of complex function:
$$f:({\mathbb C}^{n+1},x)\to ({\mathbb C},0)$$
one can associate a smooth locally trivial fibration for $1\gg \varepsilon >0$:
$$\varphi_{\varepsilon}:{\mathbb S}_\varepsilon(x)\setminus f^{-1}(0)\rightarrow {\mathbb S}^1$$
induced by $f/|f|$, where ${\mathbb S}_\varepsilon(x)$ is the sphere centered at
$x$ with radius $\varepsilon$ and ${\mathbb S}^1$ is the circle of radius
$1$ of ${\mathbb C}$ centered at the origin.

In \cite{Hamm1971} H. Hamm made the observation that, when $x$ is an isolated critical point of $f$, 
the fibration of Milnor is isomorphic to the local fibration, for $1\gg \varepsilon\gg\eta>0$:
$$\psi_{\varepsilon, \eta}:\rond{B}_\varepsilon(x)\cap f^{-1}({\mathbb S}_\eta)\rightarrow {\mathbb S}_\eta$$
induced by $f$, where $\rond{B}_\varepsilon(x)$ is the open ball centered at the point $x$ with radius $\varepsilon$.

From the work of \cite{Hamm1973a} (Th\'eor\`eme 1.2.1 p. 322) the hypothesis of isolated singularity can be lifted. Moreover the proper map:
$$\overline{\psi}_{\varepsilon, \eta}:B_\varepsilon(x)\cap f^{-1}({\mathbb S}_\eta)\rightarrow {\mathbb S}_\eta$$
is a locally trivial fibration.

Milnor's fibration leads to a notion of monodromy associated to $f$ at~$x$. Precisely let $\varphi:X\rightarrow{\mathbb S}^1$ be a proper locally trivial
smooth fibration. One can build on $X$ a smooth vector field $v$ which lifts the unit vector field tangent to ${\mathbb S}^1$.
The integration of this vector field defines a smooth morphism $h:F\to F$ of a fiber  of $\varphi$ onto itself that we call {\it a geometric 
 monodromy of~$\varphi$}. A geometric monodromy is not uniquely defined, but one can prove that its isotopy class is unique.
Therefore there is an isomorphism induced by a geometric monodromy of~$\varphi$ on the homology (or cohomology) of the fiber $F$
called {\it the monodromy of $\varphi$}.

In the case of Milnor's fibration one often use the terminology of {\it local geometric monodromy}
and {\it local monodromy} of $f$ at the point $x$.

In \cite{Trang1975} the second named author gave a proof of the fact that for any germ of complex analytic function:
$$f:({\mathbb C}^{n+1},x)\to ({\mathbb C},0)$$ 
having a 
critical point at $x$, there is a local geometric monodromy of $f$ at $x$ without fixed points.

By a well-known theorem of S. Lefschetz (see {\it e.g.} \cite[p. 179]{Hatcher2002}) this result implies 
that the local monodromy of $f$ at $x$ has a Lefschetz number
equal to $0$. In fact, in \cite{ACampo1973}, A'Campo showed that the Lefschetz number is zero in a more general 
situation: let $(X,x)$ be any germ of complex analytic space and denote by ${\mathfrak m}_{X,x}$ the the maximal ideal of 
the local ring $\mathcal O_{X,x}$ of germs of analytic functions of $X$ at $x$.


\begin{theorem}[\textit{cf.}  \cite{ACampo1973}]\label{A'Campo}
Let $f\colon(X,x)\to ({\mathbb C},0)$ be a germ of complex analytic function such that $f\in {\mathfrak m}^2_{X,x}$. 
Then the local monodromy of $f$ at $x$ has Lefschetz number equal to $0$.
\end{theorem}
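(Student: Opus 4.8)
The plan is to derive Theorem~\ref{A'Campo} from the stronger geometric statement — the main theorem of this paper, generalizing the theorem of \cite{Trang1975} recalled above — that $f$ admits a geometric local monodromy $h\colon F\to F$ on its Milnor fibre with no fixed point. Granting this, the theorem is immediate: the Milnor fibre $F=B_\varepsilon(x)\cap X\cap f^{-1}(t)$ of $f$ at $x$, for $1\gg\varepsilon\gg|t|>0$, is compact and triangulable, hence has the homotopy type of a finite CW complex, so the classical Lefschetz fixed point theorem (\cite[p.~179]{Hatcher2002}) applies and a fixed-point-free self-map has Lefschetz number $\sum_i(-1)^i\tr\bigl(h_*\colon H_i(F;\mathbb Q)\to H_i(F;\mathbb Q)\bigr)=0$. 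Since the monodromy of $f$ at $x$ is by definition the homology automorphism induced by any geometric monodromy and the isotopy class of the latter is well defined, this is precisely the Lefschetz number of the monodromy. Everything therefore reduces to constructing the fixed-point-free geometric monodromy.

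\textbf{Constructing $h$.} I would follow and extend the relative polar curve method of \cite{Trang1975}. Fix a closed embedding $(X,x)\hookrightarrow(\C^N,0)$ and a generic linear form $\ell\colon\C^N\to\C$, and work with the Hamm--L\^e fibration $\overline\psi_{\varepsilon,\eta}\colon B_\varepsilon(x)\cap X\cap f^{-1}({\mathbb S}_\eta)\to{\mathbb S}_\eta$. Introduce the relative polar curve $\Gamma=\Gamma_\ell(f)$, the closure in $X$ of the critical locus of $(\ell,f)$ restricted to $X_{\mathrm{reg}}\setminus\Sigma f$; for $\ell$ generic $\Gamma$ is empty or a reduced curve on which $\ell$ is finite, with plane-curve image $\Delta=(\ell,f)(\Gamma)\subset\C^2$. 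The Milnor fibration is then transported around ${\mathbb S}_\eta$ by a vector field lifting the unit tangent field of ${\mathbb S}_\eta$, tangent to the strata of a Whitney stratification of $X$ adapted to $f^{-1}(0)$, to $\Gamma$ and to $X_{\mathrm{sing}}$, and projecting under $\ell$ to an explicit ``carrousel'' field on a small disc $D$ of the $\ell$-line that carries the finitely many points $\Delta\cap(D\times\{t\})$ along the branches of $\Delta$ as $t$ runs once around ${\mathbb S}_\eta$, spiralling so that its time-$2\pi$ return map $g\colon D\to D$ has fixed points only at the origin and at the ``resting'' branch points of $\Delta$. Integrating the lifted field over $[0,2\pi]$ produces a geometric monodromy $h\colon F\to F$.

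\textbf{No fixed points.} A fixed point of $h$ has an integral curve closing up after a full turn, so it projects under $\ell$ to a fixed point of $g$, either the origin or a resting branch point of $\Delta$. Over the origin, $h$ restricts, up to isotopy, to a geometric monodromy of the restriction of $f$ to a generic one-dimensional section $X\cap H$ of $X$ through $x$, composed with a rotation in the $\ell$-direction; since $f\in\mathfrak m_{X,x}^2$, this restriction vanishes to order at least $2$ on each branch of $X\cap H$, so its Milnor fibre is a finite set of points cyclically permuted in orbits of size $\ge 2$ and has no fixed point — this is exactly the point at which \cite{Trang1975} used that $x$ is a critical point of $f$. Over a resting branch point the analysis is the same for an iterated generic section, handled by induction on $\dim X$, the case $\dim X\le 1$ being the base case just described. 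Patching these local descriptions along the stratification yields a geometric monodromy of $f$ at $x$ with no fixed point, and by the first paragraph Theorem~\ref{A'Campo} follows.

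\textbf{Main obstacle.} The real work is to run the relative polar curve and carrousel construction over a singular space $X$. One must: (i) prove the genericity statement that $\dim\Gamma_\ell(f)\le 1$, that $\ell|_{\Gamma}$ is finite and that $\Delta$ is a plane curve — a result on generic linear projections of singular analytic spaces, of L\^e--Teissier type, that has to be established in this setting; (ii) choose the Whitney stratification and the lifting so that the vector field is a genuine controlled stratified lift near $X_{\mathrm{sing}}\cap F$ with complete flow; and (iii) secure the inductive step, which requires that a generic hyperplane section $X\cap H$ again carry a Milnor fibration for $f|_{X\cap H}\in\mathfrak m_{X\cap H,x}^2$ and satisfy the inductive hypothesis. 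Once these geometric inputs are in place, the carrousel combinatorics, the rotation-number computation at the origin, and the Lefschetz step are routine.
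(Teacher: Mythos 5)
Your proposal reproduces exactly the paper's proof of Theorem~\ref{A'Campo}: it is deduced from the existence of a fixed-point-free geometric monodromy (Theorem~\ref{nofixedpoints}) by applying the classical Lefschetz fixed-point theorem to the compact, triangulable Milnor fibre of the Hamm--L\^e fibration. Your sketch of how to establish the fixed-point-free monodromy itself — relative polar curve and Cerf diagram adapted to a Whitney stratification, carousel vector field over the solid torus, lift by controlled stratified vector fields, and induction on $\dim(X,x)$ via the hyperplane section $X\cap\{\ell=0\}$ — also matches the paper's strategy, with the minor caveat that the carousel of Lemma~\ref{carousel} is built so that the origin is its \emph{only} closed orbit, so there are no "resting branch points" to treat separately.
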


A'Campo used heavy mathematical machinery to prove this result in \cite{ACampo1973} and attributed its proof to P. Deligne.

In this work we give the following generalization of Lê's theorem, which in particular implies \cref{A'Campo}:

\begin{theorem}\label{nofixedpoints}
Let $f\colon(X,x)\to ({\mathbb C},0)$ be a germ of complex analytic function such that $f\in {\mathfrak m}^2_{X,x}$. 
Then there is a local geometric monodromy of $f$ at $x$ which does not fix any point.
\end{theorem}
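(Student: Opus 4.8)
The plan is to argue by induction on $d=\dim_x X$, following the strategy of \cite{Trang1975} but transporting it to the possibly singular space $X$ by means of the relative polar curve. Fix an embedding $(X,x)\hookrightarrow(\mathbb{C}^N,0)$, a Whitney stratification of a representative of $X$ adapted to $f^{-1}(0)$ and to the singular locus, and a sufficiently general linear form $\ell\colon\mathbb{C}^N\to\mathbb{C}$. For the base case $d\le 1$ the Milnor fibre $F$ of $f$ at $x$ is a finite set of points, and a geometric monodromy is a permutation of $F$ which, over each branch $\gamma$ of $(X,x)$ not contained in $f^{-1}(0)$, is a single cycle whose length equals $\operatorname{ord}_t(f\circ\gamma)$. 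Since every parametrization $\gamma\colon(\mathbb{C},0)\to(X,x)$ satisfies $\gamma^{*}(\mathfrak m_{X,x})\subseteq(t)$, the hypothesis $f\in\mathfrak m^2_{X,x}$ forces $\operatorname{ord}_t(f\circ\gamma)\ge 2$ on every such branch, so all cycles have length at least $2$ and the monodromy has no fixed point.

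For the inductive step, assume $d\ge 2$ and consider $\Phi=(\ell,f)\colon(X,x)\to(\mathbb{C}^2,0)$ together with its relative polar curve $\Gamma=\Gamma_{\ell,f}$, the closure of the critical locus of $\Phi$ on the open strata away from $f^{-1}(0)$; for general $\ell$ this is a curve (possibly empty) meeting a small ball $B_\varepsilon$ in finitely many branches through $x$. The crucial estimate is that along every branch $\gamma$ of $\Gamma$ with $f\circ\gamma\not\equiv 0$ one has $\operatorname{ord}_t(f\circ\gamma)\ge 2\operatorname{ord}_t(\ell\circ\gamma)$: writing $f=\sum_i g_ih_i$ with $g_i,h_i\in\mathfrak m_{X,x}$, and using that a general $\ell$ attains $\operatorname{ord}_t(\ell\circ\gamma)=\min\{\operatorname{ord}_t(g\circ\gamma):g\in\mathfrak m_{X,x}\}$ simultaneously on these finitely many branches, each $g_i\circ\gamma$ and $h_i\circ\gamma$ vanishes to order at least $\operatorname{ord}_t(\ell\circ\gamma)$. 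Geometrically, every branch of the discriminant $\Delta=\Phi(\Gamma)$ is tangent to the $\ell$-axis to order at least $2$.

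With $1\gg\varepsilon\gg\alpha\gg\eta>0$ chosen in the usual way, the projection $\ell$ displays the Milnor fibre $F=X\cap B_\varepsilon\cap f^{-1}(\eta_0)$ over the disc $D_\alpha$, with generic fibre the Milnor fibre of a general hyperplane slice of $f$, with exceptional values on the finite set $\Delta\cap\{v=\eta_0\}$, and with a distinguished central fibre $F_0$ over $u=0$ equal to the Milnor fibre at $x$ of the restriction of $f$ to $X\cap\ell^{-1}(0)$. As $\eta_0$ runs once around $\partial D_\eta$ the exceptional values describe L\^e's carrousel in $D_\alpha$, and the tangency estimate guarantees that they stay at distance of order $\eta^{1/k}$ from $0$ with $k\ge 2$, and that the carrousel is realised by an ambient isotopy $(\psi_\theta)_{\theta\in[0,2\pi]}$ of $D_\alpha$ whose time-$2\pi$ map $\psi_{2\pi}$ has $0$ as its only fixed point. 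One then constructs the geometric monodromy $h\colon F\to F$ covering $\psi_{2\pi}$: outside a small tube around $F_0$ every lift of the carrousel isotopy is automatically fixed-point-free, since it carries the fibre over $u\ne 0$ to the fibre over $\psi_{2\pi}(u)\ne u$; on the central tube one inserts the fixed-point-free geometric monodromy of $f|_{X\cap\ell^{-1}(0)}$ furnished by the inductive hypothesis. That hypothesis does apply to the slice, because the image of $\mathfrak m^2_{X,x}$ in $\mathcal O_{X\cap\ell^{-1}(0),x}$ is contained in $\mathfrak m^2_{X\cap\ell^{-1}(0),x}$ and $\dim_x(X\cap\ell^{-1}(0))=d-1$. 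Matching the two constructions along the collar of the central tube, by means of a relative version of the Milnor fibration of $\Phi$ and the description of $F$ obtained by attaching to $F_0$ the part lying over $D_\alpha\setminus\{0\}$, produces a geometric monodromy of $f$ at $x$ with no fixed point.

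The main obstacle is this last assembling step carried out on the \emph{singular} space $X$: vector fields cannot be extended freely, so the lift of the carrousel isotopy, the monodromy on the central tube and their agreement on the overlap must all be produced inside $X$ and controlled near $f^{-1}(0)$ and near $\partial B_\varepsilon$. This is exactly where the relative polar curve and the stratified structure of $\Phi$ do the real work, and it also requires setting up with care the picture of a fibration whose fibre jumps from the generic slice Milnor fibre to the central fibre $F_0$ before the carrousel argument can even be stated.
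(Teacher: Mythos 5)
Your outline reproduces the paper's strategy almost exactly: tangency of the Cerf diagram, the carousel of \cite{Trang1975}, and induction on $\dim(X,x)$ by passing to the slice $X\cap\ell^{-1}(0)$. The base case matches the paper's (which does it via the normalization $n\colon\tilde X\to X$ and an extension $F\in\mathfrak m^2_{\C^N,x}$, giving $F\circ n(s)=s^k$ with $k\ge2$). But the step you yourself flag as ``the main obstacle'' is precisely the new technical content of the paper and cannot be asserted: one must lift the carousel vector field $\omega$ from $D\times\partial D_\eta$ to a \emph{stratified, globally integrable} vector field on $X\cap\Delta\cap\Phi^{-1}(D\times\partial D_\eta)$ that extends the inductively given vector field $\xi_1$ on $X\cap\Delta\cap\ell^{-1}(0)\cap f^{-1}(\partial D_\eta)$. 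The Thom--Mather second isotopy lemma does not apply directly here, so the paper first proves that $\Phi=(\ell,f)$ admits a Thom $A_\Phi$ stratification after refining $\mathcal S$ by $\Phi^{-1}(\mathcal T)$ (Theorem~\ref{ThomPhi}, resting on Lemma~\ref{chungo} and the Thom condition for $f$ alone, Theorem~\ref{Thomf}), constructs a privileged polydisc compatible with the inductive slice (Lemma~\ref{lem:polydisc}), and then invokes the apparatus of controlled tube systems and controlled stratified vector fields (Theorems~\ref{vf2}, \ref{vf1}, \ref{vf3}, \ref{vf4} and Corollaries~\ref{lifting}, \ref{lifting2}) to produce the lift $\xi$ and to glue it to $\xi_1$ by a partition of unity. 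Without this, your ``matching the two constructions along the collar of the central tube'' is an unproved claim; it is the content of the paper's Sections~2 and~3.

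A smaller point: your order-theoretic proof of tangency of $\Delta_\ell(f,x)$ is in the right spirit, but the asserted ``simultaneous genericity'' --- that a single $\ell$ both defines the polar curve $\Gamma_\ell$ and realizes $\operatorname{ord}_t(\ell\circ\gamma)=\min\{\operatorname{ord}_t(g\circ\gamma):g\in\mathfrak m_{X,x}\}$ on every branch $\gamma$ of $\Gamma_\ell$ --- needs justification, because the branches of $\Gamma_\ell$ themselves depend on $\ell$. The paper avoids this circularity by a direct l'H\^opital computation along a parametrization $p(t)$ of a branch: choosing a hyperplane $H'\in\check{\mathbb P}^{N-1}$ outside the exceptional fibre of the conormal map $\nu\colon C(\overline{S_\alpha})\to\overline{S_\alpha}$ so that the unit normals $a_t$ of $H'\cap T_{p(t)}\overline{S_\alpha}$ are well defined and converge, and then showing $\lim_{t\to0}(f\circ p)(t)/(\ell\circ p)(t)=dF_x(a_0)=0$ because an extension $F$ of $f$ can be taken in $\mathfrak m^2_{\C^N,x}$.
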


A big part of the argument in \cite{Trang1975} relies strongly on the fact that for a sufficiently generic linear form $\ell:{\mathbb C}^{n+1}\to {\mathbb C}$, the map~$\Phi=(\ell,f):(X,x)\to({\mathbb C}^{2},0)$ satisfies the Thom condition (see definition below) with respect to some convenient stratification. This allows to lift and integrate the plane vector field given by the carrousel in order to construct a local geometric monodromy in which we can apply an induction argument. It is well known that any complex analytic function $f\colon(X,x)\to ({\mathbb C},0)$ satisfies the Thom condition with respect to some stratification (see for instance \cite{Briancon1994,Hironaka1977}). However, this is not true in general when we consider maps~$(X,x)\to(\C^p,0)$, with $p>1$ (see \cref{ex:Sabbah}). 

Unfortunately, sometimes the Thom condition used to be ignored and some authors use it but without an explicit mention. So, we feel that it is important to emphasize this aspect of the theory. In \cref{sec2}, we show that given a map $\Phi=(g,f):(X,x)\to({\mathbb C}^{2},0)$ and a Whitney stratification of $X$, then $\Phi$ satisfies the Thom condition provided that:
\begin{enumerate}[label=\arabic*., ref=\arabic*]
	\item $g$ is the restriction of a submersion $\tilde{g}:U\to\mathbb{C}$;
	\item $f^{-1}(0)$ is union of strata;
	\item $\Gamma\coloneqq \overline{C(\Phi)\setminus f^{-1}(0)}$ is empty or a curve (i.e., it has dimension one), for $C(\Phi)$ the critical locus of $\Phi$;
	\item $\Phi^{-1}(0)\cap \Gamma\subseteq \left\{x\right\}$;
	\item for each stratum $S\in\mathcal{S}$ such that $\dim S \geq1$, $\ker D_x\tilde{g}\notin \nu_{\bar{S},f}^{-1}(x)$, where $\nu_{\bar{S},f}:C(\bar{S},f)\to\bar{S}$ is the relative conormal bundle of $\bar{S}$;
\end{enumerate}
see \cref{ThomGeneral}. In particular, all these conditions are easily satisfied when we consider $g$ as the restriction of a sufficiently generic linear form $\ell:{\mathbb C}^{n+1}\to {\mathbb C}$. 

Another important contribution of \cite{Trang1975} is the notion of privileged polydisks. This gives a fundamental system of neighbourhoods which is more convenient than the Euclidean balls if we want to proceed by induction on the dimension of $X$. We show in Section 4 how to adapt this notion in the case that $X$ is not smooth.

We remark that  a statement of \cref{nofixedpoints} already appeared in \cite{Tibar1993} (see also \cite{Tibar1992}), following the ideas of \cite{Trang1975} about relative polar curves and the carrousel construction.  However, the technical details about the Thom condition, the lifting and integration of the vector field or the construction of the privileged polydisks are not mentioned in \cite{Tibar1993}.  Here, we offer a complete and detailed explanation of all the steps in the proof.


As in \cite{Trang1975}, the proof of \cref{nofixedpoints} uses the notion of relative polar curve, which is 
due essentially to R. Thom. When $X=\C^{n+1}$ we first choose
 a sufficiently small open neighbourhood $U$ of $x$.  For almost all linear function $\ell:{\mathbb C}^{n+1}\to {\mathbb C}$, 
 one has that the critical space of the restriction $\left.(\ell,f)\right|_{U\setminus\{f=0\}}$ is either always empty or 
a non-singular curve. When it is non-empty, we call the closure of the critical space of $\left.(\ell,f)\right|_{U\setminus\{f=0\}}$ 
the relative polar curve 
$\Gamma_\ell(f,x)$ of $f$ at $x$ with respect to $\ell$. 

The remarkable property of the relative polar curve is that, when $f$ has a critical point at $x$, 
its image by $\left.(\ell,f)\right|_{U}$ is empty or a curve that 
Thom called the Cerf's diagram, which has as tangent cone the axis of values of $\ell$
(see {\it e.g.} \cite[ Proposition 6.7.5]{LNS}). 
We show in \cref{polar} how to adapt this construction to the case that $X$ is singular at $x$ by taking a Whitney 
stratification. The condition that $f\in {\mathfrak m}^2_{X,x}$ is used here in order to prove
that the tangent cone of the Cerf’s diagram is the $\ell$-axis.

Associated with the Cerf's diagram we have the \textit{carrousel}, a construction which  again appears in \cite{Trang1975}. 
This is a vector field $\omega$ over a small enough solid torus $D\times \partial D_\eta$ centered at the origin 
in $\C\times\C$ such that:
\begin{enumerate}[label=(\roman*)]
	\item its projection onto the second component gives a tangent vector field over $\partial D_\eta$ of
	length $\eta$ and positive direction (called in \cite{Trang1975} the \textit{unitary vector field} of $\partial D_\eta$),
	\item its restriction to $\left\{0\right\}\times\partial D_\eta$ is indeed the unitary vector field,
	\item for every component of the Cerf's diagram with reduced equation $\delta_\alpha=0$, $\omega$ is 
	tangent to every $\delta_\alpha=\epsilon$ with $\epsilon$ small enough, and
	\item the only integral curve that is closed after a loop in $\partial D_\eta$ is $\left\{0\right\}\times\partial D_\eta$.
\end{enumerate}

Now we can use techniques of stratification theory to lift the carrousel $\omega$ and obtain a stratified vector field on $X$ 
which is globally integrable. The integral curves of this vector field define a local geometric monodromy of $f$ at $x$ 
and of its restriction to $X\cap\{\ell=0\}$, defined on section 2. By condition (iv), the fixed points of the monodromy of $f$ 
can appear only on $X\cap\{\ell=0\}$. Thus, the proof of \cref{nofixedpoints} follows by induction on the dimension of $X$ at $x$.

We give an example that the condition that $f\in {\mathfrak m}^2_{X,x}$ is necessary, even if $f$ has critical 
point at $x$ in the stratified sense. In the last section, we also extend a well-known theorem of the second 
named author (see \cite{Trang1973}) about no coalescing of families of functions with isolated critical points 
and constant total Milnor number. The extension works when we consider functions on spaces with maximal rectified homotopical depth (also called spaces with 
Milnor property in \cite{Hamm-Le-Handbook}).


\section{Relative polar curves}\label{polar}

Let $f:(X,x)\to ({\mathbb C},0)$ be the germ of a complex analytic function. We still call $f:X\to {\mathbb C}$ a representative of this germ.
Let ${\mathcal S}=(S_\alpha)_{\alpha\in A}$ be a Whitney stratification of a sufficiently small representative ${X}$ of $(X,x)$.
 By Whitney stratification we mean a regular complex analytic stratification defined by H.Whitney in  
\cite[\S 19, p. 540]{Whitney1965}. In particular the strata $S_\alpha$ and their closures $\overline{S}_\alpha$ are
complex analytic spaces.
We can assume that $x$ is in the closure $\overline{S}_\alpha$ of all the strata $S_\alpha$. So, the set of indices $A$ is finite.

Using \cite[Lemma 21 \S3]{Trang2017} one can prove that there is a non-empty open 
Zariski subset $\Omega_{\alpha}$ of the space of affine functions such that, for every $\ell$ in $\Omega_{\alpha}$, $\ell(x)=0$  and the critical locus
$C_{\alpha}$ of $\left.(\ell, f)\right|_{S_{\alpha}\setminus f^{-1}(0)}$, the function induced by $(\ell, f)$ on the space $S_{\alpha} \setminus f^{-1}(0)$,
is either always empty or a non-singular curve. Then, the closure $\Gamma_{\alpha}$ of $C_{\alpha}$ in ${X}$ is either empty or a reduced curve. Furthermore, we can also show that one can choose the $\Omega_\alpha$'s such that the restriction 
$\left.(\ell, f)\right|_{\Gamma_{\alpha}}$ is finite for any $\alpha\in A$. We define (see for instance \cite[p. 310]{Trang1976});

\begin{definition}
For $\ell\in \bigcap_{\alpha\in A}\Omega_\alpha$ the union $\bigcup_{\alpha\in A}\Gamma_\alpha$ is either empty or a reduced curve. This curve
is called the {\it relative polar curve $\Gamma_\ell(f,{\mathcal S},x)$} of $f$ at $x$ relatively to $\ell$ and the stratification ${\mathcal S}$ of $X$.
\end{definition}

\begin{remark}
Notice that if the stratum $S_\alpha$ has dimension one, the whole stratum $S_\alpha$ is critical and $\Gamma_\alpha$ is the
closure $\overline{S}_\alpha$. In this case, since $S_\alpha$ is connected, $\Gamma_\alpha$ is a branch 
of the curve $\Gamma_\ell(f,{\mathcal S},x)$ at $x$, {\it i.e.} an analytically
irreducible curve at $x$.
\end{remark}


A theorem of Remmert implies that the image of $\Gamma_\alpha$ by $\left.(\ell, f)\right.$ is 
 either empty or a curve $\Delta_\alpha$, for any $\alpha\in A$ (see, for example, \cite[p. 5]{Bell1998}).

We define:

\begin{definition}
The union $\bigcup_{\alpha\in A}\Delta_\alpha$ is either empty or a reduced curve. When it is a curve, it
is called the {\it Cerf's diagram $\Delta_\ell(f,{\mathcal S}, x)$ of $f$ at $x$ relatively to $\ell$} and the stratification ${\mathcal S}$.
Otherwise we say that the Cerf's diagram of $f$ at $x$ relatively to $\ell$ and the stratification ${\mathcal S}$ is empty.
\end{definition}

When the stratification ${\mathcal S}$ is fixed, we shall speak of the relative polar curve $\Gamma_\ell(f,x)$ and the Cerf's diagram 
$\Delta_\ell(f,x)$ without mentioning the stratification ${\mathcal S}$. But the reader must be aware that the notion of polar curve and Cerf's diagram
depends on the choice of the stratification.
\vskip.1in
\vskip.1in
We shall go back and forth between the case $(\mathbb{C}^{n+1},x)$ and the general case of germs of reduced analytic spaces $(X,x)$ 
and compare them to generalize what we have in \cite{Trang1975}. For example, if $(X,x)=({\mathbb C}^{n+1},x)$, we can consider a 
Whitney stratification which has only one stratum. In \cite{Trang1975}, we have seen that the emptiness of $\Gamma_\ell(f,x)$ means 
that the Milnor fiber of $f$ at $x$ is diffeomorphic to the product of the Milnor fiber of $\left.f\right|_{\{\ell=0\}}$ at $x$ with an open disc, 
hence the local geometric monodromy of $f$ at $x$ is induced by the product of the local geometric monodromy of $\left.f\right|_{\{\ell=0\}}$ 
at $x$ and the identity of the open disc.

Also, for a germ of complex analytic function $f:(X,x)\to ({\mathbb C},0)$, in general, we may suppose that the hyperplane $\{\ell=0\}$ 
is transverse to all the strata of the Whitney stratification ${\mathcal S}$ and it induces a Whitney stratification on ${X}\cap \{\ell=0\}$. 
Then, using the same arguments of \cite{Trang2017}, we can prove the following:

\begin{proposition}
If, for a general linear form $\ell$ at $x$, the relative polar curve $\Gamma_\ell(f,x)$ is empty, there is a stratified homeomorphism of 
the Milnor fiber of $f$ at $x$ and the product with an open disc with the Milnor fiber of the restriction $\left.f\right|_{{X}\cap\{\ell=0\}}$ at $x$.
\label{emptycerf}
\end{proposition}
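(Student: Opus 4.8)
The plan is to deduce Proposition \ref{emptycerf} from Thom's first isotopy lemma applied to the map $\Phi=(\ell,f)$ near $x$, exploiting that the emptiness of $\Gamma_\ell(f,x)$ is exactly what makes $\Phi$ a stratified submersion away from $f^{-1}(0)$, and then trivialising the resulting fibration over a contractible disc. First I would fix good representatives: shrink $X$ so that $\mathcal S=(S_\alpha)_{\alpha\in A}$ is a Whitney stratification with $x\in\overline{S}_\alpha$ for all $\alpha$, take $\ell\in\bigcap_{\alpha}\Omega_\alpha$ generic enough that moreover $\{\ell=0\}$ is transverse to every stratum (so that it induces a Whitney stratification on $X\cap\{\ell=0\}$, as recalled before the statement), and choose $\varepsilon>0$ that is simultaneously a Milnor radius for $f$ at $x$ and for $f|_{X\cap\{\ell=0\}}$ at $x$, i.e.\ such that $\mathbb S_{\varepsilon'}$ is transverse to all strata of $\mathcal S$ and of $\mathcal S\cap\{\ell=0\}$ for $0<\varepsilon'\le\varepsilon$. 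Then I would pick open discs $D_\rho,D_\eta\subset\mathbb C$ around $0$ with $0<\eta\ll\rho\ll\varepsilon$ and $|\ell|<\rho$ on $B_\varepsilon$ (possible since $\ell(x)=0$), and set $F=X\cap B_\varepsilon\cap f^{-1}(\eta)$ and $F_0=F\cap\{\ell=0\}$ for the two Milnor fibres, with their induced stratifications; since $|\ell|<\rho$ on $B_\varepsilon$, one has $F=X\cap B_\varepsilon\cap\Phi^{-1}(D_\rho\times\{\eta\})$ and $F_0=X\cap B_\varepsilon\cap\Phi^{-1}(\{0\}\times\{\eta\})$.

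Next comes the use of the hypothesis. The emptiness of $\Gamma_\ell(f,x)$ means that the critical locus $C_\alpha$ of $\Phi|_{S_\alpha\setminus f^{-1}(0)}$ is empty for every $\alpha$, and in particular that no stratum lying outside $f^{-1}(0)$ has dimension $\le 1$ (a one-dimensional such stratum would be a branch of the polar curve). Hence, near $x$ and off $f^{-1}(0)$, $\Phi$ is a submersion on each stratum, with open image in $\mathbb C^2$. I would then consider
\[
\Phi\colon\ M:=X\cap B_\varepsilon\cap\Phi^{-1}(D_\rho\times D_\eta^{*})\ \longrightarrow\ D_\rho\times D_\eta^{*},\qquad D_\eta^{*}:=D_\eta\setminus\{0\},
\]
with $M$ Whitney stratified, with boundary $\mathbb S_\varepsilon\cap M$, by the traces on it of the strata of $\mathcal S$. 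By the choice of the radii this map is proper and a submersion on every stratum: besides the interior submersivity just noted, one needs $\mathbb S_\varepsilon$ to be transverse, inside each $S_\alpha$, to every fibre $\Phi^{-1}(t,s)$ with $(t,s)\in D_\rho\times D_\eta^{*}$ — and for a two-dimensional stratum this simply forces the (finite) fibre to avoid the sphere, so such strata contribute no boundary piece. Thom's first isotopy lemma, in its version for Whitney stratified spaces with boundary, then makes $\Phi|_M$ a stratum-preserving locally trivial fibration over $D_\rho\times D_\eta^{*}$. Restricting it over the contractible disc $D_\rho\times\{\eta\}$ gives a stratified trivialisation $F\cong D_\rho\times F_0$; the disc can be taken open, since $\ell(F)$ is contained in the open disc of radius $\rho$, and, because $\varepsilon$ was chosen to be a Milnor radius for $f|_{X\cap\{\ell=0\}}$ as well, the factor $F_0$ is exactly the Milnor fibre of $f|_{X\cap\{\ell=0\}}$ at $x$. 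This is the assertion.

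The step I expect to be the main obstacle is the one underlying properness and boundary submersivity above: the claim that one can really choose $\varepsilon\gg\rho\gg\eta>0$ so that the round sphere $\mathbb S_\varepsilon$ is transverse, inside every stratum, to all fibres of $\Phi$ over $D_\rho\times D_\eta^{*}$ (equivalently, that $\varepsilon$ is $\Phi$-regular in that range). This is where $\Gamma_\ell(f,x)=\emptyset$ is used beyond the interior submersivity, and I would establish it by the same curve-selection and transversality arguments, carried out stratum by stratum, that give the corresponding statement for $X=\mathbb C^{n+1}$ in \cite{Trang1975} and in the stratified setting of \cite{Trang2017}. The remaining ingredients — the induced Whitney stratification on $X\cap\{\ell=0\}$, the identification of the Milnor fibre of $f$ with a fibre of $\overline\psi_{\varepsilon,\eta}$, and Thom's first isotopy lemma with boundary — are standard and carry over with only notational changes from the hypersurface case in \cite{Trang1975}.
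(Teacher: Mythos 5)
Your overall strategy — make $\Phi=(\ell,f)$ a proper stratified submersion over the punctured solid torus and invoke the Thom--Mather first isotopy lemma, then restrict over the contractible slice $D_\rho\times\{\eta\}$ — is exactly what the paper's remark after the proposition points to. But the way you set up the radii has a genuine inconsistency, and it is not cosmetic. You ask simultaneously for $0<\eta\ll\rho\ll\varepsilon$ and for $|\ell|<\rho$ on $B_\varepsilon$; since $\ell$ is a fixed nonzero linear form, $\sup_{B_\varepsilon}|\ell|$ is of order $\varepsilon$, so the second requirement forces $\rho\gtrsim\varepsilon$, contradicting the first. If you go with the large $\rho$ (which is what you actually use, in order to write $F=X\cap B_\varepsilon\cap\Phi^{-1}(D_\rho\times\{\eta\})$), then the boundary submersivity of $\Phi$ on $\mathbb{S}_\varepsilon$ genuinely fails, and no curve-selection argument will rescue it: take $X=\C^3$, $f=xy$, $\ell=x$, for which $\Gamma_\ell(f,x)=\emptyset$; the fibre $\Phi^{-1}(t,s)$ is the complex line $\{x=t,\ y=s/t\}$ through $(t,s/t,0)$ in the $z$-direction, and it is tangent to $\mathbb{S}_\varepsilon$ precisely when $|t|^2+|s/t|^2=\varepsilon^2$, which happens inside $D_\rho\times D_\eta^*$ whenever $\rho$ is comparable to $\varepsilon$. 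So $\Phi|_{\mathbb{S}_\varepsilon\cap M}$ is not a submersion and the isotopy lemma does not apply. If instead you take $\rho\ll\varepsilon$ so that the boundary transversality holds, then $X\cap B_\varepsilon\cap\Phi^{-1}(D_\rho\times\{\eta\})$ is a proper subset of the Milnor fibre $F=X\cap B_\varepsilon\cap f^{-1}(\eta)$, and you need an extra argument to identify the two up to homeomorphism, which you do not supply.

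This tension is exactly the reason why L\^e in \cite{Trang1975}, and the paper in Section 3, work with a \emph{privileged polydisc} $D_\rho\times\Delta'$ rather than a Euclidean ball: the $\ell$-direction is shrunk to an arbitrarily small disc $D_\rho$ while the other directions stay of order $\varepsilon$. The fibres of $\Phi$ over the open disc $D_\rho$ never touch the boundary face $\partial D_\rho\times\Delta'$, and the transversality of the remaining faces with the fibres is exactly the privileged condition (Lemma~\ref{lem:polydisc}), which is established by the very curve-selection arguments you invoke; the Prill good-neighbourhood argument cited at the end of Section 3 then identifies the polydisc Milnor fibre with the usual one. To close your proof, replace $B_\varepsilon$ by a privileged polydisc and note that the $\partial D_\rho$-face does not meet the fibration; the rest of your argument then goes through as written.
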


The proof of this proposition is based on the techniques Mather used to prove the Thom-Mather 
first isotopy lemma, {\it cf.} \cite{Mather2012,Gibson1976}. We will outline these techniques in the next section and use them later.

Now observe that when $(X,x)=({\mathbb C}^{n+1},x)$ the point $x$ is a critical point of $f$ if and only if 
$f\in{\mathfrak m}_{{\mathbb C}^{n+1},x}^2$, where ${\mathfrak m}_{{\mathbb C}^{n+1},x}$ is the maximal ideal of the local ring ${\mathcal O}_{{\mathbb C}^{n+1},x}$. 
In the case of a germ of complex analytic function on $(X,x)$, the hypothesis $f\in {\mathfrak m}_{X,x}^2$, where ${\mathfrak m}_{X,x}$
is the maximal ideal of ${\mathcal O}_{X,x}$, replaces the condition that $f$ is critical at $x$. In fact, a key result 
for the proof of \cref{nofixedpoints} is:

\begin{proposition}\label{prop tangent}
For a sufficiently general linear form $\ell$, if $f\in {\mathfrak m}_{X,x}^2$, every branch of the Cerf's diagram 
$\Delta_\ell (f,x)$ is tangent at the point $(0,0)$ to the first axis, the image by $(\ell,f)$ of $\{f=0\}$.
\end{proposition}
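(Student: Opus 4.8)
The plan is to reduce the statement to a local computation along each branch of the relative polar curve $\Gamma_\ell(f,x)$. Since the Cerf's diagram $\Delta_\ell(f,x)$ is by definition the union of the images $\Delta_\alpha = (\ell,f)(\Gamma_\alpha)$, and tangency to the $\ell$-axis at $(0,0)$ is a property that holds for the whole curve if and only if it holds for each branch, it suffices to fix a stratum $S_\alpha$ for which $\Gamma_\alpha$ is nonempty and show that $\Delta_\alpha$ is tangent to $\{v=0\}$ at the origin, where we write $(u,v)$ for coordinates on the target $\C\times\C$ of $(\ell,f)$ (so $u$ records the value of $\ell$ and $v$ the value of $f$). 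By the remark following the definition of the polar curve, we may also treat one-dimensional strata separately, where $\Gamma_\alpha=\overline{S}_\alpha$; in all cases $\Gamma_\alpha$ is a reduced curve on which $(\ell,f)$ is finite, so it admits a normalization $n\colon(\widetilde\Gamma_\alpha,0)\to(\Gamma_\alpha,x)$ which we parametrize by each branch separately via $t\mapsto\gamma(t)$ with $\gamma(0)=x$.

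The heart of the argument is the estimate of vanishing orders along such a branch. Tangency of $\Delta_\alpha$ to the $\ell$-axis at the origin is equivalent to the statement that $\operatorname{ord}_t\, f(\gamma(t)) > \operatorname{ord}_t\, \ell(\gamma(t))$, i.e.\ that the $f$-coordinate of the parametrized Cerf diagram vanishes to strictly higher order than the $\ell$-coordinate. First I would observe that $\ell\circ\gamma$ does not vanish identically: if it did, the branch would lie in the hyperplane $\{\ell=0\}$, but for $\ell$ sufficiently general the polar curve meets $\{\ell=0\}$ only at $x$ (this is exactly the genericity built into the choice of $\Omega_\alpha$, together with transversality of $\{\ell=0\}$ to the strata), so no branch of $\Gamma_\alpha$ is contained in $\{\ell=0\}$. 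Hence $m:=\operatorname{ord}_t\,\ell(\gamma(t))$ is a finite positive integer. Next I would use the hypothesis $f\in\mathfrak m_{X,x}^2$: choosing generators $g_1,\dots,g_r$ of $\mathfrak m_{X,x}$, write $f=\sum_{i,j} h_{ij}\, g_i g_j$ with $h_{ij}\in\mathcal O_{X,x}$; then along the branch $f(\gamma(t))=\sum h_{ij}(\gamma(t))\, g_i(\gamma(t))\, g_j(\gamma(t))$, and since each $g_i\circ\gamma$ vanishes at $t=0$ we get $\operatorname{ord}_t\, f(\gamma(t)) \ge 2\min_i \operatorname{ord}_t\, g_i(\gamma(t)) \ge 2$. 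On the other hand, because $\ell$ can be taken among the generators of $\mathfrak m_{X,x}$ (after translating so $\ell(x)=0$) and, for $\ell$ general, to be a generic linear combination realizing the minimal multiplicity, one has $m = \operatorname{ord}_t\,\ell(\gamma(t)) = \min_i \operatorname{ord}_t\, g_i(\gamma(t)) = \operatorname{mult}_t(\gamma)$, the multiplicity of the branch. Combining, $\operatorname{ord}_t\, f(\gamma(t)) \ge 2m > m$, which is precisely the desired tangency.

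The main obstacle, and the step I would spend the most care on, is justifying that for a sufficiently general $\ell$ the order $\operatorname{ord}_t\,\ell(\gamma(t))$ equals the multiplicity of the branch $\gamma$ simultaneously for \emph{all} branches of \emph{all} the finitely many $\Gamma_\alpha$, and that $\ell$ can simultaneously be chosen inside $\cap_\alpha\Omega_\alpha$, transverse to all strata, and generic in the above sense. This is a finite intersection of nonempty Zariski-open conditions on the space of linear forms vanishing at $x$ (the polar curves $\Gamma_\alpha$ themselves depend on $\ell$, so one must be slightly careful and argue, as in \cite{Trang2017}, that the relevant genericity is stable), so such an $\ell$ exists; I would invoke \cite[Lemme~21 \S3]{Trang2017} once more to package this. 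I would also note the subtlety that $f\in\mathfrak m_{X,x}^2$ really is needed here: if $f\notin\mathfrak m_{X,x}^2$ then $f$ itself could be taken as one of the coordinates and $\operatorname{ord}_t f(\gamma(t))$ could equal $m$, making $\Delta_\alpha$ transverse rather than tangent to the $\ell$-axis — this is the content of the example promised in the introduction. With the genericity of $\ell$ secured, the order estimate above closes the proof.
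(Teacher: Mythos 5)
Your proposal takes a genuinely different route from the paper's. The paper parametrizes a branch $p(t)$ of $\Gamma_\alpha$, applies de l'H\^opital to $f(p(t))/\ell(p(t))$, and then uses the defining property of the polar curve --- that $df$ and $d\ell$ are collinear along $p(t)$ --- to collapse the quotient to $df_{p(t)}(a_t)/\ell(a_t)$, which tends to $dF_x(a_0)=0$. For singular $X$ this requires a careful choice of hyperplane $H'$ via the conormal space $C(\overline{S_\alpha})$ so that the normal vector $a_t$ has a well-defined limit. Your argument ignores the collinearity entirely and compares vanishing orders: $\operatorname{ord}_t f(\gamma(t))\ge 2\min_i \operatorname{ord}_t g_i(\gamma(t))$ from $f\in\mathfrak m_{X,x}^2$, against $\operatorname{ord}_t \ell(\gamma(t))=\min_i\operatorname{ord}_t g_i(\gamma(t))$ for $\ell$ generic. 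This is more elementary, treats smooth and singular $X$ uniformly, and in fact would prove tangency of $(\ell,f)(\gamma)$ for \emph{any} curve germ $\gamma$ through $x$, not just polar branches.

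That extra generality is precisely where the gap sits. The key step $\operatorname{ord}_t\ell(\gamma(t))=m_\gamma$ (i.e.\ the tangent line of each branch $\gamma$ of $\Gamma_\ell$ avoids $\ker\ell$) is a Zariski-open condition on $\ell$ \emph{for a fixed curve}, but here $\Gamma_\ell$ itself moves with $\ell$. You correctly flag this circularity, but then dismiss it by appealing again to \cite[Lemme~21]{Trang2017}. That lemma, as used in Section~1 of this paper, yields that $C_\alpha$ is empty or a smooth curve and that $(\ell,f)|_{\Gamma_\alpha}$ is finite; it says nothing about the tangent cone of $\Gamma_\ell$ at $x$ avoiding $\ker\ell$. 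Note also that the weaker statement you do justify --- no branch of $\Gamma_\alpha$ lies in $\{\ell=0\}$ --- only gives $\operatorname{ord}_t\ell<\infty$, not $\operatorname{ord}_t\ell=m_\gamma$; if $\operatorname{ord}_t\ell(\gamma(t))\ge 2m_\gamma$ your inequality breaks down. The paper's proof does not need this extra transversality: the collinearity $df_{p(t)}=c(t)\,d\ell_{p(t)}$ gives $(f\circ p)'=c(t)\,(\ell\circ p)'$ directly, and $c(t)\to dF_x(a_0)=0$ yields $\operatorname{ord}_t(f\circ p)>\operatorname{ord}_t(\ell\circ p)$ regardless of whether $\ell$ realizes the multiplicity of the branch. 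To close your argument you would need to prove (or locate a precise reference for) the claim that for $\ell$ in a dense open set, the tangent cone of $\Gamma_\ell$ at $x$ is transverse to $\ker\ell$, which is a genuine statement about polar curves in the stratified setting and not a formal consequence of the genericities already invoked.
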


\begin{proof} Of course, we have a Whitney stratification ${\mathcal S}=(S_\alpha)_{\alpha\in A}$
on a sufficiently small representative of the germ $(X,x)$. We may assume that $x$ is in the closure of all the strata.

It is enough to prove the proposition for the image $\Delta_\alpha$ of $\Gamma_\alpha$ by $(\ell,f)$, for each $\alpha\in A$.

In \cite{Trang1975}, it was considered that $\ell$ is a coordinate of $\mathbb{C}^{n+1}$ to compare easily the growth of $f$ and $\ell$ 
along a component of the Cerf's diagram. We are going to give a similar proof for the $(\mathbb{C}^{n+1},x)$ case for any 
general linear form, and generalize it twice to reach our current context.

Suppose that $(X,x)=(\mathbb{C}^{n+1},x)$, for our purpose $\ell$ can be expressed as: 
$$\ell(v)=\left\langle v,a\right\rangle=\sum_{i=1}^{n+1} v_i\overline{a_i},$$ 
and we can assume that $\left\|a\right\|=1$. Let us define $H$ as the kernel of $\ell$ and, then, any vector of ${\mathbb C}^{n+1}$ can be written 
as a sum of a vector of $H$ and a multiple of the vector $a$ (note that $a$ is the unitary normal of $H$).

Now we can take a parametrization $p(t)$ of
 a branch of
 $\Gamma_\alpha$ and compare the growths of $f$ and $\ell$ there. Using de l'H\^opital's rule 
and identifying $\ell$ with its differential we have:
$$ \lim_{t\to 0} \frac{\left(f\circ p\right)(t)}{\left(\ell\circ p\right)(t)}=\lim_{t\to 0} 
\frac{\left(f\circ p\right)'(t)}{\left(\ell\circ p\right)'(t)}=\lim_{t\to 0} \frac{df_{p(t)}\big(p'(t)\big)}{\ell \big(p'(t)\big)}.$$

Now we can decompose $p'(t)$ as the sum of a vector of $H$, say $p_H'(t)$, and $\lambda a$. Hence:
$$\lim_{t\to 0} \frac{df_{p(t)}\big(p'(t)\big)}{\ell \big(p'(t)\big)} =\lim_{t\to 0} 
\frac{df_{p(t)}\big(p_H'(t)\big)+\lambda df_{p(t)}\left(a\right)}{\ell\big(p_H'(t)\big)+\lambda \ell\left(a\right)}.$$
Furthermore, we know that $df$ and $\ell$ are colinear along $p(t)$, and we have assumed $\ell(a)=\|a\|=1$, therefore

\begin{align*}
\lim_{t\to 0} \frac{df_{p(t)}\big(p_H'(t)\big)+\lambda df_{p(t)}\left(a\right)}{\ell\big(p_H'(t)\big)+\lambda \ell\left(a\right)}&=
\lim_{t\to 0}\frac{df_{p(t)}\left(a\right)}{\ell\left(a\right)} \\
&= df_x(a).
\end{align*}

At this point we see where the condition of $f\in {\mathfrak m}_{X,x}^2$ appears, because this last term is zero. This proves the tangency of
the statement in this context.

If we want the same result on $(X,x)\subset(\mathbb{C}^{N},x)$, with $(X,x)$ regular at $x$, the main problem is that 
$\ell$ is defined in ${X}$, and we cannot work with such a vector $a$ and space $H$. What we can do 
is to extend $\ell$ to the ambient space and work on the tangent bundle 
of ${X}$, hence we can choose a linear function $L:\mathbb{C}^{N}\to\mathbb{C}$ such that $\left.L\right|_{X}=\ell$ and $H'$ as the kernel 
of $L$. By genericity $T_x {X}$ is not contained in $H'$ so $H'\cap T_x {X}$ is a hyperplane of $T_x {X}$, say $H$. This happens, nearby $x$, 
for every tangent space along a parametrization of $\Gamma_\alpha$ (in this case there is only one strata), so we can reproduce 
the computations we did before.

Finally, if $(X,x)$ is general, we can still extend $\ell$ but we cannot work with the tangent bundle of ${X}$ any more 
({\it e.g.}, if $(X,x)$ is a Whitney umbrella at $x$ even $x$ is a strata by itself). To avoid this complication, firstly we shall find a convenient 
hyperplane of $\mathbb{C}^N$ for the role of $H'$ and then work with the extension of $f$ when needed. From now on, we 
will work with a strata $S_\alpha$, or its adherence, but for the sake of the similarity with the previous cases we will call
${Y}$ the closure $\overline{S_\alpha}$.

Therefore, our first step is to find an hyperplane to work with. For this purpose consider the (projective) conormal space $C({Y})$
of ${Y}$ in $\mathbb{C}^N$, this is given by the closure in $Y\times \check{\mathbb{P}}^{N-1}$ of the space:
$$ \left\{(q,H')\,|\, (q,H')\in {Y}^{reg} \times \check{\mathbb{P}}^{N-1}: T_q {Y}^{reg} \subset H'\right\}, $$
together with the conormal map $\nu: C({Y})\to {Y}$. 
It is a classical fact ({\it cf.} \cite[II.4.1]{Teissier1982}) that ${\dim}\,\nu^{-1}(x)\leq N-2$ or, being more specific, 
there is a hyperplane $H'$ outside $\nu^{-1}(x)$ and by continuity outside every fiber of $\nu$ over a neighbourhood of $x$ in $Y$.

Therefore, consider a linear form $L:\mathbb{C}^N\to\mathbb{C}$ with such a hyperplane $H'$ as kernel and define 
$\ell$ to be $\left.L\right|_{Y}$. Furthermore, since $f\in \mathfrak{m}_{Y,x}^2$, we can take an extension $F:(\mathbb{C}^N,x)\to(\mathbb{C},0)$ 
of $f$, such that $F\in \mathfrak{m}_{\mathbb{C}^N,x}^2$.

Finally, as we have done before, consider a parametrization $p(t)$ of a branch of $\Gamma_\alpha$ and compare the growths of $f\big(p(t)\big)$ and $\ell\big(p(t)\big)$. 
To do so define $a_t$ to be the unitary normal of the hyperplane $H_t \coloneqq H'\cap T_{p(t)} {Y}$ in $T_{p(t)} {Y}$, well defined by 
the previous election of $H'$, and $a_0$ its limit. Now we can keep proceeding as before and finish the computation with $F$, {\it i.e.}:

\begin{align*}
	\lim_{t\to0}\frac{\left(f\circ p\right)(t)}{\left(\ell\circ p\right)(t)}&=\lim_{t\to0} df_{p(t)}(a_t)\\
	&=\lim_{t\to0}dF_{p(t)}(a_t)\\
	&=dF_x(a_0)\\
	&=0.
\end{align*}

Note that the election of $H'$, for $S_\alpha$, was made in an open set. Since we have only a finite number of
strata to which $x$ is adherent, we can take a common $H'$ 
for every $S_\alpha$ and repeat the computation. This finishes the proof.
\end{proof}

%
%
%
%


\section{Thom condition for maps onto the plane}\label{sec2}

As it is well known, the Thom condition appears as hypothesis in many important results because it gives control on a map defined over stratified spaces. For example, it appears in the Thom-Mather isotopy lemmas. We recall now its definition (see also \cref{fig:t-thomcondition}).

Recall the definitions of stratified map and Thom map:

\begin{definition} \label{def:stratmap}
A map $f:X\to X'$ between Whitney stratified sets $X$ and $X'$ is a \emph{stratified map} if the restriction of the map on each stratum of $X$ is submersive onto a stratum of $X'$, \textit{i.e.}, $f(S_\alpha)\subseteq S'_\beta$ and $\left.f\right|_{S_\alpha}:S_\alpha\rightarrow S'_\beta$ is submersive where $S_\alpha$ is a stratum of $X$ and $S'_\beta$ is a stratum of $X'$. 
\end{definition}


\begin{definition}[{{\it cf.} \cite[II.2.5]{Gibson1976}}]\label{Thomconditiondef}
Let $f:N\to N'$ be a smooth map between manifolds $N$ and $N'$ and $X\subset N$ and $X'\subset N'$ two stratified sets such that $f(X)\subset X'$ and the induced map $\left.f\right|:X\to X'$ is a stratified map. We say that $S_t$ is \textit{Thom regular over $S_r$ at $p$ 
relatively to $\left.f\right|:X\rightarrow X'$} if any sequence of points  $\left\{q_n\right\}_n\subset S_t$ converging to $p\in S_r$ is such that
\begin{equation}
\ker D_p\left.f\right|_{S_r} \subseteq \lim_{n}\ker D_{q_n}\left.f\right|_{S_t},
\label{eq:Thomregcondition}
\end{equation} 
when the limit exists. If $\left.f\right|$ is Thom regular for any pair of strata we simply say that the pair of stratifications is a \textit{Thom stratification} of $\left.f\right|$ and that $\left.f\right|$ is a \textit{Thom map} or that it satisfies the \textit{Thom $a_{\left.f\right|}$ condition}. 
\end{definition}

\begin{figure}
	\centering
		\includegraphics[width=0.75\textwidth]{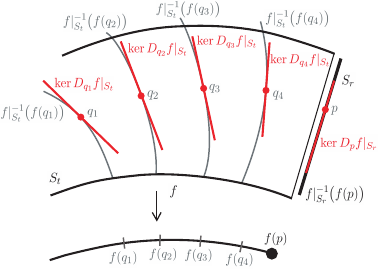}
	\caption{Representation of the Thom condition.}
	\label{fig:t-thomcondition}
\end{figure}

Not every map has a stratification such that it is a Thom map, for example the map $f:\mathbb{R}^2\rightarrow\mathbb{R}^2 $ so that $f(x,y)=(x,xy)$ does not admit a Thom stratification (see \cite[page 24]{Gibson1976}). However, if we consider complex functions $f:(X,x)\to (\mathbb{C},0)$, with $(X,x)$ a germ of a complex space, we have a result of existence of stratifications providing the Thom regularity condition given by Hironaka in \cite[Corollary 1]{Hironaka1977}. Furthermore, Brian\c{c}on, Maisonobe and Merle, in \cite[Theorem 4.2.1]{Briancon1994}, gave a result that assures the Thom condition provided the stratification is Whitney regular.

Let $(X,x)\subseteq(\mathbb{C}^N,x)$ be a germ of a complex analytic space. This contrast suggests that the case of a map germ $\Phi=(g,f):(X,x)\to (\mathbb{C}^2,0)$ is harder to study, as there could be maps that do not admit any Thom stratification (see also \cref{ex:Sabbah} below). In order to study this case we need another definition.

\begin{definition}
Given a complex space $S\subseteq\mathbb{C}^N$ and a function $f:S\to\mathbb{C}$, the \textit{conormal bundle of $\bar{S}$ relative to $f$} or, simply, the \textit{relative conormal bundle of $\bar{S}$} is the closed space $$ C\big(\bar{S},f\big)\coloneqq\overline{\left\{(p,H)\in S^{reg}\times \check{\mathbb{P}}^{N-1} | \ker D_qf \subseteq H \right\}} $$ together with the projection $\nu_{\bar{S},f}: C(\bar{S},f)\to \bar{S}$.
\end{definition}

\begin{remark}
Observe that $C(\bar{S},f)$ coincides with $C(\bar{S})$ given in the proof of \cref{prop tangent} when $f$ is constant.
\end{remark}

Furthermore, we will need to refine some stratifications to provide the Thom condition:

\begin{lemma}\label{chungo} Let $\varphi:V\to W$ be a smooth map, where $V\subset\mathbb R^n$ and $W\subset \mathbb R^m$ are open subsets. Assume that $\mathcal S$ is a Whitney stratification of a subset $X\subset V$ such that for all $S\in\mathcal S$, $\varphi|_S:S\to W$ is a submersion and that $\mathcal T$ is a Whitney stratification of $W$. Then,
\[
\mathcal S'=\big\{S\cap \varphi^{-1}(T):\ S\in\mathcal S,\ T\in\mathcal T\big\}
\]
is also a Whitney stratification of $X$.
\end{lemma}

\begin{proof}
Take a pair of strata $A=S\cap\varphi^{-1}(T)$ and $B=S'\cap\varphi^{-1}(T')$ such that $A\subseteq \bar{B}$, with $S,S'\in\mathcal S$ and $T,T'\in\mathcal T$. 
We factorize $\varphi$ as the composition:
\[
\begin{tikzcd}
V\arrow[r,"i"]&G(\varphi)\arrow[r,"\pi_2"]&W
\end{tikzcd}
\]
where $G(\varphi)\subset V\times W$ is the graph of $\varphi$, $i$ is the diffeomorphism given by $i(v)=\big(v,\varphi(v)\big)$ and $\pi_2(v,w)=w$. It follows that $B$ is Whitney regular over $A$ in $V$ if and only if $i(B)$ is Whitney regular over $i(A)$ in $G(\varphi)$, or equivalently, in $V\times W$. To prove this observe that we can write $i(A)$ and $i(B)$ in the form
\begin{equation}\label{inter}
i(A)=i(S)\cap(S\times T),\quad i(B)=i(S')\cap(S'\times T').
\end{equation}
Moreover, we know that $i(S')$ is Whitney regular over $i(S)$ and $S'\times T'$ is Whitney regular over $S\times T$.

Let $\{x_n\}$ and $\{y_n\}$ be sequences in $i(A)$ and $i(B)$ respectively, both converging to $x\in i(A)$. We also assume that $\overline{x_ny_n}$ converges to a line $L$ and that $T_{y_n}i(B)$ converges to a plane $E$ in the corresponding Grassmannians of $\mathbb R^n\times\mathbb R^m$. We have to show that $L\subset E$.

By taking subsequences if necessary, we can assume that $T_{y_n} i(S')$ converges to a plane $E_1$ and that 
$T_{y_n}(S'\times T')$ converges to another plane $E_2\times E_3$, again in the corresponding Grassmannians of $\mathbb R^n\times\mathbb R^m$. Since $i(S')$ is Whitney regular over $i(S)$ and $S'\times T'$ is Whitney regular over $S\times T$, we have $L\subset E_1\cap (E_2\times E_3)$.

From \eqref{inter} it follows that $E\subset E_1\cap (E_2\times E_3)$. Furthermore, $\varphi|_{S'}:S'\to W$ is a submersion, 
which factors as the composition
\[
\begin{tikzcd}S'\ar[r,"i"] &i(S')\ar[r,"\pi_2"] &W\end{tikzcd}.
\]
This implies that $T_{y_n}i(S')$ and $T_{y_n}(S'\times T')$ are transverse in $\big(T_{\pi_1(y_n)}S'\big)\times \mathbb R^m$. Therefore, $E_1$ and $E_2\times E_3$ are also transverse in $E_2\times\mathbb R^m$. Thus, $\dim E=\dim E_1\cap (E_2\times E_3)$ and hence $E=E_1\cap (E_2\times E_3)$.
\end{proof}

\begin{theorem}\label{ThomGeneral}
Consider the complex analytic germ 
$$\Phi=(g,f):(X,x)\to(\mathbb{C}^2,0)$$ 
and take a representative $X$ of $(X,x)$ such that it is a closed analytic subset of some open set $U\subseteq \mathbb{C}^N$, and consider also a Whitney stratification $\mathcal{S}$ of $X$. Assume that:
\begin{enumerate}[label=\arabic*., ref=\arabic*]
	\item $g$ is the restriction of a submersion $\tilde{g}:U\to\mathbb{C}$;\label[assumption]{item:submersion}
	\item $f^{-1}(0)$ is union of strata;\label[assumption]{item:f0f0}
	\item $\Gamma\coloneqq \overline{C(\Phi)\setminus f^{-1}(0)}$ is empty or a curve (i.e., it has dimension one), for $C(\Phi)$ the critical locus of $\Phi$;
	\item $\Phi^{-1}(0)\cap \Gamma\subseteq \left\{x\right\}$;\label[assumption]{item:finiteness}
	\item for each stratum $S\in\mathcal{S}$ such that $\dim S \geq1$, $\ker D_x\tilde{g}\notin \nu_{\bar{S},f}^{-1}(x)$, where $\nu_{\bar{S},f}:C(\bar{S},f)\to\bar{S}$ is given by the relative conormal bundle of $\bar{S}$.\label[assumption]{item:transverse}
\end{enumerate}
Then, with a possibly smaller representative, $\Phi$ is a Thom map with a stratification
$\left\{\mathcal{S}',\mathcal{T}\right\}$ 
such that $\mathcal{S}'$ refines $\mathcal{S}$.
\end{theorem}
\begin{proof}
First of all, observe that $f$ is a Thom map with the provided stratification $\mathcal{S}$ of $X$, together with the stratification $\left\{\mathbb{C}\setminus0,0\right\}$ in the target, by \cite[Theorem 4.2.1]{Briancon1994}. Now, we want a refinement of $\mathcal{S}$, say $\mathcal{S}'$, that makes $\Phi$ a Thom map with a convenient stratification $\mathcal{T}$ in the target. 

As we show below, this is attained if we refine $\mathcal{S}$ so that $\Gamma$, $f^{-1}(0)\setminus\Gamma$ and $X\setminus\big(f^{-1}(0)\cup \Gamma\big)$ are union of strata and the refinement is Whitney regular. We can use \cref{chungo} to achieve this on $X\setminus\big(f^{-1}(0)\cup \Gamma\big)$, with a restriction of $\Phi$ and the stratification $\mathcal{T}$ on $\mathbb{C}^2$ given by $0$, $\Delta\setminus0$, $L\setminus0$ and $\mathbb{C}^2\setminus\big(\Delta\cup L \big)$, where $\Delta\coloneqq\Phi(\Gamma)$ and $L\coloneqq \Phi\big(f^{-1}(0)\big)$. Observe that we already have that $f^{-1}(0)$ is a union of strata, by \cref{item:f0f0}, so we only need to check that adding $\Gamma\setminus x$ as stratum does not change the Whitney condition. However, as $\Gamma$ is a curve, the set where the Whitney condition could fail (sometimes called \textit{bad set}) is of dimension zero (see, for example, \cite[Proposition 2.6]{Gibson1976}). This implies that we can take a smaller representative of $(X,x)$ if needed where the result holds.

Now we show that the Thom condition holds for $\Phi$ and the stratifications $\mathcal{S}'$ and $\mathcal{T}$ as above. We consider two strata $S_r,S_t\in\mathcal{S}'$ such that $S_r\subseteq \bar{S_t}$ and a sequence $\left\{q_n\right\}_n\subset S_t$ converging to $p\in S_r$. We want to check Thom's regularity condition given in \cref{eq:Thomregcondition} for $\Phi$. To do so, we separate by cases: considering strata in $\Gamma$, $f^{-1}(0)\setminus\Gamma$ or $X\setminus\big( f^{-1}(0)\cup\Gamma\big)$.

If $S_r$ is contained in $\Gamma$, the Thom condition is always satisfied trivially because $\Phi$ is a local diffeomorphism when restricted to $\Gamma$ except, perhaps, at $x$. This comes from the fact that there is a representative of the restriction $\Phi|_\Gamma$ that is finite, by \cref{item:finiteness}, so $\Delta=\Phi(\Gamma)$ is a curve or empty.


If both $S_r$ and $S_t$ are contained in $f^{-1}(0)\setminus\Gamma$, we have that the Thom condition (recall \cref{eq:Thomregcondition}) for $\Phi$ is equivalent to
\begin{equation}\label{eq:thomf0}
T_p S_r \cap   \ker D_{p} \tilde{g} \subseteq  \lim_n \big( T_{q_n} S_t \cap \ker D_{q_n} \tilde{g}\big).
\end{equation}
However, as $\ker D_{\bullet} \tilde{g}$ is a hyperplane (by \cref{item:submersion}) and by \cref{item:transverse}, we have that
$$ \lim_n \big( T_{q_n} S_t \cap \ker D_{q_n} \tilde{g}\big) = \lim_n \big( T_{q_n} S_t \big)  \cap \ker D_{p} \tilde{g}$$
and \cref{eq:thomf0} is satisfied, as $\mathcal{S}$ is a Whitney stratification and $$\lim_n \big( T_{q_n} S_t \big)\supseteq T_p S_r.$$

In a similar fashion, if $S_t$ is contained in $X\setminus\big( f^{-1}(0)\cup\Gamma\big)$ and $S_r$ is contained in $X\setminus\big( f^{-1}(0)\cup\Gamma\big)$ or $f^{-1}(0)\setminus\Gamma$, we have that
\begin{align*}
	\ker D_pf|_{S_r} \cap  \ker D_{p} \tilde{g}\subseteq& \lim_n\ker D_{q_n}f|_{S_t} \cap  \ker D_{p} \tilde{g}\\
	=&\lim_n\big(D_{q_n}f|_{S_t} \cap  \ker D_{p} \tilde{g} \big),
\end{align*}
where the first inclusion is given by the Thom condition on $f$ and the second equality is given by \cref{item:submersion,item:transverse}.
%
\end{proof}

\begin{remark}\label{rem:Thomfl}
Finally, observe that we can apply \cref{ThomGeneral} with $$\Phi=(\ell,f):(X,x)\to(\mathbb{C}^2,0)$$ given in \cref{polar}, provided $\ell$ is generic enough, to make $\Phi$ a Thom map with certain stratification. More precisely, the objects $\Gamma$ and $\Delta$ of \cref{ThomGeneral} are, in this case, the polar curves and the Cerf's diagram of $f$ relatively to $\ell$.
\end{remark}

\begin{example}\label{ex:Sabbah}
In \cite[p. 286]{Sabbah1983} it is shown that the germ
\begin{align*}
	\Phi=(g,f):(\mathbb{C}^3,0)&\to(\mathbb{C}^2,0)\\
	(x,y,z)&\mapsto (y,x^2-y^2z)
\end{align*}
does not have a representative with the Thom condition. Indeed, it is \cref{item:transverse} from \cref{ThomGeneral} what fails (observe that, in this case, $\Gamma=\varnothing$), showing its importance.
\end{example}

\section{Lifting vector fields}

The construction of the Milnor fibration of a complex analytic function $f:(X,x)\to(\C,0)$ when $X$ is 
a complex analytic space is a consequence of the Thom-Mather first isotopy lemma. The strategy 
to prove \cref{nofixedpoints} is to take a generic linear form $\ell$ on the ambient space of 
$(X,x)$ and consider the map $\Phi=(\ell,f):(X,x)\to(\C^2,0)$. We want to trivialize this map in such 
a way that its composition with the projection onto the second component $\pi_2:(\C^2,0)\to(\C,0)$ 
gives the Milnor fibration of $f$ and its restriction to $\big(X\cap \ell^{-1}(0),x\big)$ gives the Milnor fibration 
of the restriction $f:\big(X\cap \ell^{-1}(0),x\big)\to(\C,0)$. This would allow us to use an induction process, 
as in \cite{Trang1975}. 

One could think that this could be done just by using the Thom-Mather second isotopy lemma. 
Unfortunately, this seems not possible and we are forced to use some of the ingredients in the 
proof of the isotopy lemmas, like controlled tube systems or controlled stratified vector fields, 
in order to construct a lifting of the vector field which fits into our problem. For the sake of
 completeness, we include in this section all the definitions and main results that we need for 
 that purpose. Instead of the original proof of the isotopy lemmas by Mather \cite{Mather2012}, 
 we follow the notations and statements of \cite[Chapter II]{Gibson1976}, where the reader can 
 find more details and the proofs of all the results.

%

We recall that a \emph{stratified vector field} on a stratified set $X$ of a smooth manifold $N$ is a map 
$\xi:X\to TN$ tangent to each stratum $S_\alpha$ of $X$ and smooth on $S_\alpha$, 
but $\xi$ might not be continuous. We now give the definitions of controlled tube system and controlled 
stratified vector field:


\begin{definition}[{{\it cf.} \cite[II.1.4]{Gibson1976}}]
If $X$ is a submanifold of $N$, a \textit{tube at $X$} is a quadruple $T=(E,\pi,\rho,e)$ where $\pi:E\to X$ is a smooth vector bundle, 
$\rho:E\to \mathbb{R}$ is a quadratic function of a Riemann metric on $E$ that vanishes on the zero section and $e:\big(E, \zeta(X)\big)\to (N, X)$ is a germ along $\zeta(X)$ 
of a local diffeomorphism, commuting with the zero section $\zeta:X\to E$ so that $e\circ\zeta$  along $X$ is the inclusion $X\subset N$.

If $X$ is a Whitney stratified subset of a manifold $N$, a \textit{tube system for the stratification} consists of a tube for every strata.
\end{definition}



\begin{definition}[{{\it cf.} \cite[II.2.5]{Gibson1976}}]\label{def:tube}
A tube system $\mathcal{T}=\left\{T_\alpha\right\}_{\alpha\in A}$, with $T_\alpha=\left(E_\alpha,\pi_\alpha,\rho_\alpha,e_\alpha\right)$, for a Whitney stratification $\left\{S_\alpha\right\}_{\alpha\in A}$ of some subset $X$ of a manifold $N$ is {\it weakly controlled} if the relation
$$ (\pi_\alpha\circ e_\alpha^{-1}) \circ (\pi_{\alpha'}\circ e_{\alpha'}^{-1})=(\pi_{\alpha}\circ e_{\alpha}^{-1}), \;\;\;\;\; \alpha,\alpha'\in A  $$
holds for every pair of tubes $(T_\alpha,T_{\alpha'})$ where the composition makes sense.
\end{definition}


We remark that the notion of weakly controlled tube system for a stratification is not a strange thing to ask, actually any given Whitney stratification admits a weakly controlled tube system (\textit{cf.} \cite[II.2.7]{Gibson1976}).



\begin{definition}[{{\it cf.} \cite[II.3.1]{Gibson1976}}]
If we have a tube system for a Whitney stratification of $X$ and $\xi$ is a stratified vector field on $X$ we shall say that $\xi$ is 
{\it a weakly controlled vector field} if:
$$ D(\pi_\alpha\circ e_\alpha^{-1})\circ\xi=\xi\circ (\pi_\alpha\circ e_\alpha^{-1}) $$
holds for every tube, using the notation of \cref{def:tube}.
\end{definition}

Next, we give the control conditions relative to a stratified map (recall \cref{def:stratmap}).


\begin{definition}[{{\it cf.} \cite[II.2.5]{Gibson1976}}]\label{controlled5}
Let $f:N\to N'$ be a smooth map and $X\subset N$ and $X'\subset N'$ two stratified sets such that $f(X)\subset X'$ and the induced map $\left.f\right|:X\to X'$ is a stratified map. Assume also that we have a tube system 
$\mathcal{T}=\left\{T_\alpha\right\}_{\alpha\in A}$ for the stratification $\left\{S_\alpha\right\}_{\alpha\in A}$ of $X$ and a tube system $\mathcal{T}'=\{T'_\beta\}_{\beta\in B}$ for  the stratification $\{S'_\beta\}_{\beta\in B}$ of $X'$. 
Then, we say that $\mathcal{T}$ is \emph{controlled} over $\mathcal{T}'$ if 
\begin{enumerate}[label=(\roman*), font=\itshape]
	\item $\mathcal{T}$ is weakly controlled,
	\item $f\circ (\pi_\alpha\circ e_\alpha^{-1}) = (\pi_\beta\circ e_\beta^{-1}) \circ f$, for every $S_\alpha$ mapping into $S_\beta '$, and
	\item $ (\rho_\alpha\circ e_\alpha^{-1})\circ (\pi_{\alpha'}\circ e_{\alpha'}^{-1})=(\rho_\alpha\circ e_\alpha^{-1})$ holds for every pair 
	$(T_\alpha,T_{\alpha'})$ such that $f(S_\alpha\cup S_{\alpha'})\subseteq S_\beta '$ for some $S_\beta'$ in $X'$.
\end{enumerate}
\end{definition}

In addition to the control conditions we also need a regularity condition for stratified maps of the same nature as the Whitney condition for stratified sets. This is the Thom condition given in \cref{Thomconditiondef}. In fact, the Thom condition ensures the existence of a controlled tube system as follows:

\begin{theorem}[{{\it cf.} \cite[II.2.6]{Gibson1976}}]\label{vf2}
Let $N,N',X, X',f$ be as in \cref{controlled5} and assume $\left.f\right|:X\to X'$ is a Thom map. Then, each weakly controlled tube system $\mathcal{T}'$ of $X'$ has a tube system $\mathcal{T}$ of $X$ controlled over $\mathcal{T}'$.
\end{theorem}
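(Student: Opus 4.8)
The plan is to reproduce Mather's construction of control data adapted to a stratified map, in the form of \cite[II.2.6]{Gibson1976}: the tubes $T_\alpha$ of $\mathcal{T}$ are built one stratum at a time, by induction on $\dim S_\alpha$, so that after each step the conditions (i)--(iii) of Definition \ref{controlled5} hold among the tubes produced so far. Since we only need $\mathcal{T}$ to be \emph{controlled over} $\mathcal{T}'$, it is enough to keep $\mathcal{T}$ weakly controlled together with the two relative conditions (ii) and (iii); no further absolute control of $\mathcal{T}$ is required, and nothing more than weak control is assumed on $\mathcal{T}'$.

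First I would fix a total order on the finite index set $A$ refining the frontier relation, so that $S_{\alpha'}\subset\overline{S_\alpha}\setminus S_\alpha$ forces $\alpha'$ to precede $\alpha$ (in particular $\dim S_{\alpha'}<\dim S_\alpha$), and give the $0$-dimensional strata the trivial tubes. Assume now that tubes $T_{\alpha'}$ have been constructed for every $\alpha'$ preceding a given $\alpha$, forming a partial system controlled over $\mathcal{T}'$, and let $S'_\beta$ be the stratum of $X'$ with $f(S_\alpha)\subseteq S'_\beta$, with tube $T'_\beta=(E'_\beta,\pi'_\beta,\rho'_\beta,e'_\beta)$. The task at this stage is to produce $T_\alpha=(E_\alpha,\pi_\alpha,\rho_\alpha,e_\alpha)$ at $S_\alpha$ such that, after the customary identifications through the diffeomorphism germs $e$,
\begin{enumerate}[label=(\alph*)]
\item $\pi_{\alpha'}\circ\pi_\alpha=\pi_{\alpha'}$ near $S_{\alpha'}$ for every $S_{\alpha'}<S_\alpha$ (equivalently, $\pi_\alpha$ preserves the fibres of each such $\pi_{\alpha'}$);
\item $f\circ\pi_\alpha=\pi'_\beta\circ f$ near $S_\alpha$ (equivalently, $\pi_\alpha$ carries the fibres of $f$ into fibres of $f$ as dictated by $\pi'_\beta$);
\item the $\rho$-compatibilities of Definition \ref{controlled5}(iii) with the $T_{\alpha'}$'s mapping into $S'_\beta$.
\end{enumerate}

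I would build $\pi_\alpha$ by treating separately a neighbourhood of the frontier $\overline{S_\alpha}\setminus S_\alpha=\bigcup_{S_{\alpha'}<S_\alpha}S_{\alpha'}$ and the rest of a neighbourhood of $S_\alpha$. Away from the frontier there is nothing to match: exploiting the local triviality of the submersion $\left.f\right|_{S_\alpha}\colon S_\alpha\to S'_\beta$, one writes $\pi_\alpha$ on each trivialising chart as the product of a lift of $\pi'_\beta$ and a retraction within the fibres of $f$, and then patches these local choices with a partition of unity on $S_\alpha$; since conditions (a) and (b) are linear in the tube data in suitable local models, this patching preserves them. Near a frontier stratum $S_{\alpha'}$, by contrast, $\pi_{\alpha'}$ is already fixed and already satisfies $f\circ\pi_{\alpha'}=\pi'_{\beta'}\circ f$, and one must choose $\pi_\alpha$ that simultaneously preserves its fibres and is $f$-compatible; this is the only genuinely delicate point, and it is where the hypothesis that $\left.f\right|$ is a Thom map is used. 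Indeed, $A_{\left.f\right|}$ says exactly that along any sequence $x_n\to x\in S_{\alpha'}$ the planes $\ker D_{x_n}\left.f\right|_{S_\alpha}$ accumulate onto a subspace containing $\ker D_x\left.f\right|_{S_{\alpha'}}$, which is precisely what forces the level manifolds $S_\alpha\cap f^{-1}(w)$ to approach the level manifolds of $\left.f\right|_{S_{\alpha'}}$ regularly enough that an $f$-fibre-preserving tube at $S_\alpha$ can be matched to $T_{\alpha'}$ continuously up to $S_{\alpha'}$, Whitney's condition (b) playing the analogous role for (a) alone, exactly as in the mapless statement \cite[II.2.7]{Gibson1976}. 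Once $\pi_\alpha$ is fixed, I would take a Riemannian metric on $E_\alpha$ agreeing near each frontier stratum with the one carried by $T_{\alpha'}$ and let $\rho_\alpha$ be its squared norm; then (c) holds as in the mapless case and does not disturb (a) or (b), which involve only the $\pi$'s. Running this over all of $A$ produces $\mathcal{T}$. The main obstacle is thus the frontier step, making one retraction $\pi_\alpha$ satisfy (a) and (b) near all lower strata at once, and the Thom $A_{\left.f\right|}$ hypothesis is precisely the ingredient that resolves it; everything else is routine bookkeeping, run exactly as in the proof that a Whitney stratification admits a weakly controlled tube system.
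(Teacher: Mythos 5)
The paper does not prove Theorem~\ref{vf2}; it is quoted verbatim as a black box from \cite[II.2.6]{Gibson1976}, so there is no in-paper proof to compare against. Your sketch reproduces the overall architecture of the proof that appears in that reference (and in Mather's original notes): induction over strata in a total order refining the frontier relation, local construction of the retraction $\pi_\alpha$ by lifting $\pi'_\beta$ along the stratumwise submersion $\left.f\right|_{S_\alpha}$ in trivialising charts, patching with a partition of unity, and invoking the Thom $A_{\left.f\right|}$ condition near the frontier.

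Two points in your outline are asserted rather than argued, and both sit exactly at the spot you yourself flag as the only delicate one. First, the claim that conditions (a) and (b) are ``linear in the tube data'' so that a partition-of-unity average again satisfies them is only a heuristic: what is actually used is that the space of germs of retractions satisfying the given commutation identities, expressed in suitable local coordinates, is convex (or at least contractible), and that requires a local normal form, not a general linearity statement. Second, and more seriously, the passage from the Thom condition (a statement about limits of kernels $\ker D_{x_n}\left.f\right|_{S_\alpha}$) to the existence of a smooth retraction $\pi_\alpha$ near $S_{\alpha'}$ that \emph{simultaneously} preserves the fibres of $\pi_{\alpha'}$ and intertwines $f$ with $\pi'_\beta$ is the technical heart of \cite[II.2.6]{Gibson1976}; it is not an immediate consequence of the kernel-convergence and needs the controlled-vector-field / tube-shrinking machinery that the reference develops for this purpose. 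As a statement of intent the proposal is sound and follows the standard route, but the key lemma making the frontier step work is named, not proved.
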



We also have control conditions relative to a stratified map for stratified vector fields.

\begin{definition}[{{\it cf.} \cite[II.3.1]{Gibson1976}}]
Let $X,X',\mathcal{T}$,$\mathcal{T}'$, and $\left.f\right|:X\to X'$ be as in \cref{controlled5}. Assume that we have $\xi$ and $\xi'$ stratified 
vector fields on $X$ and $X'$, respectively. We say that $\xi$ \textit{lifts} $\xi'$ if $D f|_{S_\alpha}\circ \xi = \xi'\circ f|_{S_\alpha}$, for every stratum $S_\alpha$ of $X$. Furthermore,
 we say that $\xi$ is \emph{controlled} over $\xi'$ if
\begin{enumerate}[label=(\roman*), font=\itshape]
  \item $\xi$ lifts $\xi'$,
  \item $\xi$ is weakly controlled,
  \item $D(\rho_\alpha\circ e_\alpha^{-1})\circ\left.\xi\right|_{f^{-1}S_\beta'}=0 $	for every  $S_\alpha$ mapping into $S_\beta '$.
	%
	\end{enumerate}
\end{definition}


Again, the Thom condition is the key point to lift any weakly controlled vector field in the target to a controlled vector field in the source:

\begin{theorem}[{{\it cf.} \cite[II.3.2]{Gibson1976}}]\label{vf1}
Let $N,N',X, X',f$ be as in \cref{controlled5} and assume $\left.f\right|:X\to X'$ is a Thom map. Let $\mathcal{T}$ and $\mathcal{T}'$ be tube 
systems of the stratifications of $X$ and $X'$, respectively, such that $\mathcal{T}$ is controlled over $\mathcal{T}'$. 
Then, any weakly controlled vector field $\xi'$ on $X'$ lifts to a stratified vector field $\xi$ which is controlled over $\xi'$.
\end{theorem}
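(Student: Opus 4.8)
The strategy is to build $\xi$ by induction on the strata of $X$ taken in order of increasing dimension, constructing successive lifts over the skeleta $X_{(j)}=\bigcup_{\dim S_\alpha\le j}S_\alpha$; strata of equal dimension are disjoint and none lies in the closure of another, so they may be treated independently at each stage, and since there are finitely many strata the induction terminates. The whole argument rests on one elementary observation. Fix a stratum $S_\alpha$, with $\left.f\right|_{S_\alpha}$ submersive onto a stratum $S'_\beta$, and a point $p\in S_\alpha$. The vectors $v\in T_pS_\alpha$ that (a) satisfy $D_p(\left.f\right|_{S_\alpha})(v)=\xi'(f(p))$, (b) are tangent at $p$ to the levels of every $\rho_\gamma\circ e_\gamma^{-1}$ for which $p$ lies in the tubular neighbourhood of $T_\gamma$ (in particular for $T_\alpha$, together with the analogous condition relative to $f$ in the definition of a controlled field), and (c) project, under $D_p(\pi_\gamma\circ e_\gamma^{-1})$, to $\xi\bigl((\pi_\gamma\circ e_\gamma^{-1})(p)\bigr)$ for those same $\gamma$, where $\xi$ denotes the part already constructed on $X_{(j-1)}$, form an affine subspace of $T_pS_\alpha$. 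Affine subspaces are convex; hence, as soon as these sets are non-empty and vary smoothly with $p$, a partition of unity subordinate to a suitable cover of $S_\alpha$ glues arbitrary local smooth choices into a global smooth vector field on $S_\alpha$ realizing (a)--(c). Carrying this out stratum by stratum produces a stratified $\xi$ which is an $f$-lift of $\xi'$ and satisfies the three control conditions, i.e.\ $\xi$ is controlled over $\xi'$.

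For the base case, a minimal stratum $S_\alpha$ meets no tubular neighbourhood of a smaller stratum, so the admissibility condition reduces to $D_p(\left.f\right|_{S_\alpha})(v)=\xi'(f(p))$ together with the constraints coming from $T_\alpha$ alone; a local section exists because $\left.f\right|_{S_\alpha}$ is a submersion and, after shrinking $T_\alpha$, those extra linear equations are transverse to the affine subbundle of $\left.f\right|_{S_\alpha}$-lifts of $\xi'$. For the inductive step, assume $\xi$ is already defined on the closed set $X_{(j-1)}$ and controlled there, and let $S_\alpha$ have dimension $j$. Near the frontier $\overline{S_\alpha}\setminus S_\alpha\subset X_{(j-1)}$, each stratum $S_\gamma$ in that frontier contributes, through its tube $T_\gamma$, the retraction $\pi_\gamma\circ e_\gamma^{-1}$; the weak-control and tube-tangency conditions then pin down, on the portion of $S_\alpha$ inside the neighbourhood of $T_\gamma$, the $D(\pi_\gamma\circ e_\gamma^{-1})$-image of $\xi$ (it must equal the already-built $\xi$ at the retracted point) and force $\xi$ tangent to the $\rho_\gamma$-levels. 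Since $\mathcal{T}$ is weakly controlled and controlled over $\mathcal{T}'$, the prescriptions arising from different $S_\gamma$ agree on overlaps, and because $f\circ(\pi_\gamma\circ e_\gamma^{-1})=(\pi_\beta\circ e_\beta^{-1})\circ f$ and $\xi'$ is weakly controlled on $X'$, they are compatible with the requirement that $\xi$ be an $f$-lift of $\xi'$; so they cut out a non-empty affine admissibility set over a neighbourhood of the frontier. On the complement of a slightly smaller such neighbourhood one is back in the submersive situation of the base case, and a partition of unity between the two regions---legitimate by convexity---extends $\xi$ smoothly to all of $S_\alpha$, the glued field still satisfying the control conditions on $X_{(j)}$. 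The controlled tube system $\mathcal{T}$ over $\mathcal{T}'$ is given in the hypotheses, and exists by Theorem \ref{vf2}, so no tube has to be constructed here.

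The one genuinely delicate point---and the only place the Thom hypothesis enters---is that the affine admissibility set must stay non-empty up to and across the frontier. If, along a sequence $x_n\to x$ with $x_n\in S_\alpha$ and $x$ in a frontier stratum $S_\gamma$, the kernels $\ker D_{x_n}\left.f\right|_{S_\alpha}$ degenerated relative to $\ker D_x\left.f\right|_{S_\gamma}$, then the simultaneous linear demands ``be an $f$-lift of $\xi'$'' and ``kill the $\rho$-differentials'' could become inconsistent in the limit, obstructing a continuous choice of local lifts near the frontier matching the data on $X_{(j-1)}$. The Thom $A_{\left.f\right|}$ condition $\ker D_x\left.f\right|_{S_\gamma}\subseteq\lim_n\ker D_{x_n}\left.f\right|_{S_\alpha}$ is exactly what rules this out, and with it the retraction-plus-partition-of-unity scheme above goes through. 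The remaining verifications that the glued field satisfies each control relation on the successive skeleta are the routine bookkeeping carried out in \cite[II.3.2]{Gibson1976}.
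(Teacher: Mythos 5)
The paper does not prove Theorem \ref{vf1}; it is quoted with a ``cf.'' citation from \cite[II.3.2]{Gibson1976} and used as a black box, so there is no in-paper proof to compare your attempt against. Your outline---induction over skeleta by increasing dimension, expressing the lift constraints at each point of a stratum as an affine subspace of the tangent space, gluing local smooth sections by a partition of unity, and invoking the Thom $A_{f|}$ condition to keep the admissibility set non-empty across frontier strata---matches the standard argument in that reference, so the approach is sound.

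The one place where your sketch is weaker than the source is the non-emptiness step near a frontier stratum $S_\gamma$. Rather than the heuristic that degenerating kernels could ``become inconsistent in the limit,'' the argument in \cite[II.3.2]{Gibson1976} is a rank statement: the Thom condition, together with the relation $f\circ(\pi_\gamma\circ e_\gamma^{-1})=(\pi_\beta\circ e_\beta^{-1})\circ f$ furnished by the control of $\mathcal T$ over $\mathcal T'$, forces the combined map $\bigl(\pi_\gamma\circ e_\gamma^{-1},\,\rho_\gamma\circ e_\gamma^{-1},\,f\bigr)$ restricted to $S_\alpha$ to remain a submersion onto the relevant target on a neighbourhood of $S_\gamma$ (after shrinking the tube if necessary); non-emptiness and smooth variation of the affine admissibility fibres then follow automatically, and the partition-of-unity gluing proceeds as you describe. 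It would strengthen your write-up to replace the ``inconsistency in the limit'' phrasing with this explicit submersion argument, since that is where the Thom hypothesis is actually consumed.
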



The last ingredient is about integrability of stratified vector fields. Specifically, if we have a stratified vector field $\xi$ on $X$ and we integrate it on 
every stratum $S_\alpha$ we have a smooth flow $\theta_\alpha:D_\alpha\to S_\alpha$, where $D_\alpha\subseteq \mathbb{R}\times S_\alpha$ 
is the maximal domain of the integration, which contains $\{0\}\times S_\alpha$. Setting $D$ as the union of every $D_\alpha$, 
we obtain a map $\theta:D\to X$ that is not necessarily continuous. 

\begin{definition}[{\textit{cf.} \cite[II.4.3]{Gibson1976}}]%
With the notation above, if $\theta$ is continuous on a neighbourhood of $\left\{0\right\}\times X$ we say that $\xi$ is \emph{locally integrable}. 
Furthermore, if $D=\mathbb{R}\times X$ we say that $\xi$ is \emph{globally integrable}.
\end{definition}

It is here where the control conditions over the vector fields play their role:

\begin{theorem}[{{\it cf.} \cite[II.4.6]{Gibson1976}}]\label{vf3}
Let $N,N',X, X',f$ be as in \cref{controlled5}. Assume also that $X$ is locally closed in $N$. 
If $\xi$ and $\xi'$ are stratified vector fields on $X$ and $X'$, respectively, and $\xi$ is controlled over $\xi'$ with respect to some 
tube system $\mathcal{T}$ of $X$, then $\xi$ is locally integrable if $\xi'$ is so.
\end{theorem}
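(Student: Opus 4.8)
The plan is to follow Mather's proof of the integrability step of the Thom--Mather isotopy lemmas, in the form given in \cite[Chapter II]{Gibson1976}: one reduces the local integrability of $\xi$ to that of $\xi'$ by an induction on $\dim X$, using the control conditions to transport the flow of $\xi$ along the tubes of $\mathcal{T}$ and along $f$.

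Since local integrability concerns only the behaviour of the stratified flow $\theta$ of $\xi$ near $\{0\}\times X$, it is enough to fix a point $x\in X$, lying in a stratum $S_\alpha$, and to produce a neighbourhood $W$ of $x$ in $X$ and a number $\varepsilon>0$ such that $\theta$ is defined and continuous on $(-\varepsilon,\varepsilon)\times W$. I would argue by induction on $\dim X$, the case $\dim X=0$ being trivial. If $S_\alpha$ is open in $X$ --- which happens for instance when $S_\alpha$ is a stratum of maximal dimension --- then near $x$ the set $X$ is a smooth manifold on which $\xi$ is a smooth vector field, so the classical theorem on existence, uniqueness and smooth dependence on initial conditions for ordinary differential equations provides $\varepsilon$, $W$ and a smooth, hence continuous, flow. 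Otherwise, let $T_\alpha=(E_\alpha,\pi_\alpha,\rho_\alpha,e_\alpha)$ be the tube of $\mathcal{T}$ at $S_\alpha$; after shrinking and using the tubular embedding $e_\alpha$, we may regard a neighbourhood of $x$ in $N$ as an open piece of the vector bundle $E_\alpha\to S_\alpha$, with $\pi_\alpha$ the projection and $\rho_\alpha$ the quadratic norm.

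The point is now to read off the conditions defining ``$\xi$ is controlled over $\xi'$'', together with the fact that $\xi$ lifts $\xi'$ through $f$: along any integral curve $\gamma$ of $\xi$, the composite $\pi_\alpha\circ e_\alpha^{-1}\circ\gamma$ is an integral curve of $\xi|_{S_\alpha}$, the function $\rho_\alpha\circ e_\alpha^{-1}\circ\gamma$ is constant, and $f\circ\gamma$ is an integral curve of $\xi'$. The first two statements confine $\gamma$: its normal ``distance'' $\rho_\alpha$ is frozen, so $\gamma$ cannot leave the chosen neighbourhood in finite time, while its $\pi_\alpha$-projection runs along the smooth --- hence continuous --- flow of $\xi|_{S_\alpha}$; the third statement, together with the hypothesis that $\xi'$ is locally integrable, guarantees that the interval of definition does not shrink to $0$ as the initial point tends to $x$. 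Since $X$ is locally closed in $N$, one obtains existence and uniqueness of $\theta$ on $(-\varepsilon,\varepsilon)\times W$ for a suitable $W$. Applying the inductive hypothesis to the closed stratified subset $\overline{S_\alpha}\setminus S_\alpha$ of $X$ --- which has dimension $<\dim S_\alpha\leq\dim X$ and on which $\xi$ restricts to a vector field controlled over $\xi'$ --- yields the continuity of $\theta$ at its points near $x$; the case of a point of a stratum of dimension larger than $\dim S_\alpha$ is reduced to the present situation, with that stratum playing the role of $S_\alpha$.

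The remaining, and genuinely delicate, point --- which I expect to be the main obstacle --- is the continuity of $\theta$ at $x$ itself and at the nearby points of $S_\alpha$: one must show that $\theta(t_n,z_n)\to\theta(t,x)$ whenever $(t_n,z_n)\to(t,x)$ in $(-\varepsilon,\varepsilon)\times W$, where the $z_n$ may lie in arbitrary strata accumulating at $x$. Here one trivialises $X$ near $x$ by means of the tube $T_\alpha$: a point $z$ close to $x$ is recorded by $\pi_\alpha(z)\in S_\alpha$, the number $\rho_\alpha(z)$, and an ``angular'' datum on the link; the flow $\theta$ moves $\pi_\alpha(z)$ along the continuous flow of $\xi|_{S_\alpha}$, keeps $\rho_\alpha(z)$ fixed, and moves the angular datum continuously. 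The compatibility that makes the behaviour of $\xi$ on the various strata meeting at $x$ fit together continuously is precisely the commutation of the projections $\pi_\beta\circ e_\beta^{-1}$ built into the weakly controlled tube system $\mathcal{T}$, combined with the Whitney conditions on the stratification; a limiting argument along sequences $z_n\to x$ then yields the required continuity. The careful verification of this last step, which is the technical heart of the whole circle of ideas, is carried out in \cite[II.4.6]{Gibson1976}.
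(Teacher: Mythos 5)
The paper does not actually give a proof of this statement: Theorem~\ref{vf3} is quoted verbatim as {\cite[II.4.6]{Gibson1976}}, and the paragraph preceding the definitions in that section explicitly refers the reader to \cite[Chapter II]{Gibson1976} for all proofs. So there is no ``paper's own proof'' to match; your outline is, like the paper, ultimately deferring the technical core to the cited reference, and your list of consequences of the control conditions ($\pi_\alpha\circ e_\alpha^{-1}\circ\gamma$ is an integral curve on $S_\alpha$, $\rho_\alpha\circ e_\alpha^{-1}\circ\gamma$ is constant, $f\circ\gamma$ is an integral curve of $\xi'$) is accurate and is indeed how the argument begins.

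That said, there is a genuine logical slip in the inductive scheme that you should fix. You induct on $\dim X$ and, for $x\in S_\alpha$ with $S_\alpha$ not open, you propose to apply the inductive hypothesis to $\overline{S_\alpha}\setminus S_\alpha$, claiming this yields continuity of $\theta$ ``at its points near $x$.'' But $S_\alpha$ is a stratum, hence open in $\overline{S_\alpha}$, so $\overline{S_\alpha}\setminus S_\alpha$ contains \emph{no} points near $x$; the claim is vacuous. The strata that actually accumulate at $x$ and create the continuity problem are the $S_\beta$ with $S_\alpha\subset\overline{S_\beta}$, which have \emph{larger} dimension than $S_\alpha$, so an induction on $\dim X$ alone does not reduce the problem. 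The correct quantity to induct on in Mather's argument is the ``depth'' of the stratification near $x$ (the maximal length of a chain $S_\alpha<S_{\beta_1}<\dots$, or equivalently the number of distinct stratum dimensions): one restricts the flow to a level set $\rho_\alpha^{-1}(\varepsilon)$ in the tube, where each stratum loses one dimension and the depth drops, and this lower-depth stratified set is what the inductive hypothesis is applied to, followed by the limiting argument as $\varepsilon\to 0$ that you rightly isolate as the delicate point. As written, your reduction step does not go through; with the induction variable corrected, the rest of your sketch is a fair account of \cite[II.4.6]{Gibson1976}.
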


\begin{theorem}[{{\it cf.} \cite[II.4.8]{Gibson1976}}]\label{vf4}
Let $N,N',X, X',f$ be as in \cref{controlled5}. Assume also $f|:X\to X'$ is proper. If $\xi$ and $\xi'$ are stratified vector fields on $X$ and $X'$, 
respectively, and $\xi$ is locally integrable, then $\xi$ is globally integrable if $\xi'$ is so.
\end{theorem}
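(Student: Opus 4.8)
The plan is to prove directly that, for every $p\in X$, the maximal integral curve of $\xi$ through $p$ has domain all of $\mathbb{R}$; by the time reversal $t\mapsto -t$ it is enough to exclude a finite positive escape time. So I would fix $p\in X$, lying in a stratum $S_{\alpha_0}$, and let $\gamma_p\colon[0,b_p)\to X$ be the maximal forward integral curve of $\xi$ with $\gamma_p(0)=p$; since $\xi$ is tangent to every stratum, $\gamma_p$ stays inside $S_{\alpha_0}$. Because $\xi$ is a lift of $\xi'$ (that is, $\left.f\right|$-related to $\xi'$, as produced in Theorem~\ref{vf1}), the composite $\left.f\right|\circ\gamma_p$ is an integral curve of $\xi'$ issuing from $f(p)$; global integrability of $\xi'$ together with uniqueness of integral curves downstairs identifies it with the restriction to $[0,b_p)$ of the complete $\xi'$-integral curve $\gamma'$ through $f(p)$, which is in particular defined and continuous on the closed interval $[0,b_p]$.

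Next I would argue by contradiction, assuming $b_p<+\infty$. Set $L':=\gamma'([0,b_p])$, a compact subset of $X'$. Since $\left.f\right|\circ\gamma_p$ takes values in $L'$, the curve $\gamma_p([0,b_p))$ is contained in the preimage $L\subseteq X$ of $L'$, and $L$ is compact because $\left.f\right|\colon X\to X'$ is proper. Now local integrability of $\xi$ furnishes an open neighbourhood $W$ of $\{0\}\times X$ in $\mathbb{R}\times X$ on which the flow $\theta$ of $\xi$ is continuous; for each $z\in L$ I would choose $\varepsilon_z>0$ and an open $V_z\ni z$ with $(-\varepsilon_z,\varepsilon_z)\times V_z\subseteq W$, extract a finite subcover $V_{z_1},\dots,V_{z_k}$ of the compact set $L$, and put $\varepsilon:=\min_i\varepsilon_{z_i}>0$. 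The key consequence is that $\theta(\cdot,z)$ is defined (and continuous) on $(-\varepsilon,\varepsilon)$ for every single $z\in L$: a lower bound on the existence time that is uniform over $L$.

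To finish, I would pick $t_0\in(b_p-\varepsilon,b_p)$. Then $\gamma_p(t_0)\in L$, so $s\mapsto\theta(s,\gamma_p(t_0))$ is defined on $(-\varepsilon,\varepsilon)$ and, starting in $S_{\alpha_0}$, stays in the smooth manifold $S_{\alpha_0}$; as $\xi|_{S_{\alpha_0}}$ is smooth, uniqueness of integral curves in $S_{\alpha_0}$ gives $\gamma_p(t_0+s)=\theta(s,\gamma_p(t_0))$ wherever both sides are defined. Since the right-hand side is defined for all $s\in(-\varepsilon,\varepsilon)$ and $t_0+\varepsilon>b_p$, this extends $\gamma_p$ beyond $b_p$, contradicting maximality. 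Hence $b_p=+\infty$, and symmetrically the backward integral curve is complete, so the domain $D$ of $\theta$ equals $\mathbb{R}\times X$ and $\xi$ is globally integrable.

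I expect the main obstacle to be precisely that the global flow $\theta$ of a stratified vector field is in general discontinuous, so one cannot invoke the usual reasoning that an integral curve trapped in a compact set extends to all time: inside the non-compact stratum $S_{\alpha_0}$ the curve could a priori run out toward a lower-dimensional stratum in finite time, and the individual existence times $\varepsilon_z$ could shrink to $0$ near that boundary. The way around this is to use continuity of $\theta$ only near time $0$ — which is exactly what local integrability provides — propagate it over the compact set $L$ to obtain a uniform step $\varepsilon>0$, and then relaunch $\gamma_p$ from a point $\gamma_p(t_0)$ lying within $\varepsilon$ of $b_p$. In this scheme properness of $\left.f\right|$ is used solely to make $L$ compact, and global integrability of $\xi'$ is used solely to keep the downstairs curve — hence $L'$, hence $L$ — from escaping before time $b_p$.
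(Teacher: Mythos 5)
Your proof is correct; the paper itself does not prove Theorem~\ref{vf4} but cites it from Gibson et al., and your argument is essentially the one in that reference: properness turns the preimage of the compact downstairs arc into a compact set $L$, local integrability is compactified over $L$ to produce a uniform lower bound on the existence time, and relaunching from a time within that bound of $b_p$ contradicts maximality. You also rightly supplied the hypothesis that $\xi$ is $\left.f\right|$-related to $\xi'$ (i.e.\ a lift), which the statement as printed omits but which is indispensable for the conclusion.
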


So, to summarize, if we combine \cref{vf1,vf2,vf3,vf4} we get:

\begin{corollary}\label{lifting}
Let $N,N',X, X',f$ be as in \cref{controlled5}  and assume $f|:X\to X'$ is a Thom proper map. If we have a weakly 
controlled tube system with a weakly controlled vector field $\xi'$ on $X'$ such that it is globally integrable, it lifts to a globally 
integrable vector field on $X$.
\end{corollary}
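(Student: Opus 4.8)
The plan is to assemble Corollary~\ref{lifting} mechanically from the four lifting theorems already recorded in this section, so the work is purely one of checking that the hypotheses of each theorem are met and that the conclusions chain together. First I would fix the data: by hypothesis $f|:X\to X'$ is a proper Thom map, and we are given on $X'$ a weakly controlled tube system $\mathcal{T}'$ together with a weakly controlled stratified vector field $\xi'$ that is globally integrable. The goal is to produce on $X$ a globally integrable stratified vector field (lifting $\xi'$), so I would build it in the obvious order: first get a controlled tube system upstairs, then lift the vector field, then check local integrability, then upgrade to global integrability.

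Concretely, the first step is to invoke Theorem~\ref{vf2}: since $f|$ is a Thom map and $\mathcal{T}'$ is a weakly controlled tube system of $X'$, there is a tube system $\mathcal{T}$ of $X$ that is controlled over $\mathcal{T}'$. With $\mathcal{T}$ and $\mathcal{T}'$ in hand and $f|$ still a Thom map, the second step applies Theorem~\ref{vf1} to the weakly controlled vector field $\xi'$: it lifts to a stratified vector field $\xi$ on $X$ that is controlled over $\xi'$ (with respect to $\mathcal{T}$). The third step is Theorem~\ref{vf3}: here one needs $X$ locally closed in $N$, which is automatic for a (sufficiently small representative of a) germ of a complex analytic space, so since $\xi$ is controlled over $\xi'$ with respect to the tube system $\mathcal{T}$ and $\xi'$ is locally integrable (being globally integrable), we conclude $\xi$ is locally integrable. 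The fourth and final step is Theorem~\ref{vf4}: $f|$ is proper, $\xi$ is locally integrable, and $\xi'$ is globally integrable, hence $\xi$ is globally integrable. This $\xi$ is the desired lift.

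I would then add two small remarks to make the statement of the corollary match the hypotheses of the cited theorems exactly. One is that ``globally integrable'' for $\xi'$ indeed gives the ``$\xi'$ locally integrable'' needed in Theorem~\ref{vf3} and the ``$\xi'$ globally integrable'' needed in Theorem~\ref{vf4} — this is immediate from the definitions ($D=\mathbb{R}\times X'$ implies $\theta'$ is continuous near $\{0\}\times X'$). The other is the local closedness of $X$ in $N$ required by Theorem~\ref{vf3}; since throughout we work with a sufficiently small representative of the germ $(X,x)$ inside an ambient smooth manifold (an open ball in $\mathbb{C}^N$), $X$ is closed in that representative and hence locally closed in $N$, so the hypothesis is harmless.

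There is really no serious obstacle here: the corollary is a bookkeeping consequence of the machinery quoted from \cite{Gibson1976}, and the only thing to be careful about is the logical order — one cannot lift the vector field before having a controlled tube system, and one cannot ask for global integrability before establishing local integrability — together with verifying that each intermediate object (the controlled tube system $\mathcal{T}$, the controlled lift $\xi$) satisfies precisely the hypothesis needed by the next theorem in the chain. If anything deserves a sentence of justification it is the compatibility of the notions of ``controlled'' across Definitions for tube systems and for vector fields, but since we are using them verbatim from the same source this is not a genuine difficulty.
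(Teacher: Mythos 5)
Your proposal is correct and is essentially the paper's own argument: the paper presents Corollary~\ref{lifting} precisely as a summary obtained by chaining Theorems~\ref{vf2}, \ref{vf1}, \ref{vf3} and \ref{vf4} in exactly the order you describe (controlled tube system, then controlled lift, then local integrability, then global integrability). The small remarks you add — reading ``globally integrable'' as subsuming ``locally integrable'' per the cumulative phrasing of the definition, and noting that a small representative of the germ is locally closed in the ambient manifold — are the same implicit conventions the paper relies on.
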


\begin{corollary}\label{lifting2}
Let $f:N\to N'$ be a smooth map and let $X\subset N$ be a Whitney stratified subset such that $f|:X\to N'$ is a proper stratified submersion. If $\xi'$ is a globally integrable smooth vector field on $N'$, it lifts to a globally 
integrable vector field on $X$.
\end{corollary}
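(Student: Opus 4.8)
The plan is to deduce Corollary \ref{lifting2} from Corollary \ref{lifting}. I would equip the smooth manifold $N'$ with the trivial Whitney stratification whose only stratum is $N'$ itself, and take this as the target stratified set $X'$. With a single stratum the weak control condition of Definition \ref{def:tube} is automatic for the canonical tube $(TN',\pi,\rho,e)$ associated to a Riemannian metric, since the retraction $\pi\circ e^{-1}$ is just the identity of $N'$, hence idempotent; likewise every smooth vector field on $N'$ — in particular $\xi'$ — is weakly controlled. As $\xi'$ is globally integrable by hypothesis and $\left.f\right|:X\to N'$ is proper by hypothesis, the only hypothesis of Corollary \ref{lifting} left to verify is that $\left.f\right|:X\to N'$ is a Thom map with respect to these stratifications.

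So the whole content of the argument is the claim: if $X\subset N$ is Whitney stratified, $f:N\to N'$ is smooth, and $\left.f\right|:X\to N'$ is a stratified submersion onto the single stratum $N'$, then $\left.f\right|$ satisfies the Thom condition. To prove it, I would fix strata $S_2\subset\overline{S}_1$ of $X$ and a sequence $x_n\in S_1$ with $x_n\to x\in S_2$. Passing to a subsequence, by compactness of the relevant Grassmannians we may assume that $\tau:=\lim_n T_{x_n}S_1$ and $K:=\lim_n\ker D_{x_n}\left.f\right|_{S_1}$ both exist; it suffices to check $\ker D_x\left.f\right|_{S_2}\subseteq K$ for such subsequences (if the genuine limit exists it agrees with all subsequential ones). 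The three ingredients are: the Whitney condition (a) for $(S_1,S_2)$, giving $T_xS_2\subseteq\tau$; the submersivity hypothesis, so that $D_{x_n}\left.f\right|_{S_1}\colon T_{x_n}S_1\to T_{f(x)}N'$ and $D_x\left.f\right|_{S_2}\colon T_xS_2\to T_{f(x)}N'$ are surjective; and smoothness of $f$ on $N$, so that $Df\colon TN\to TN'$ is continuous.

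From $T_xS_2\subseteq\tau$ and surjectivity of $D_x\left.f\right|_{S_2}$ one gets that $D_xf|_\tau$ is surjective, so $\dim\ker D_xf|_\tau=\dim\tau-\dim N'=\dim S_1-\dim N'$. On the other hand, each $v\in K$ is a limit $v=\lim_n v_n$ with $v_n\in\ker D_{x_n}\left.f\right|_{S_1}\subset T_{x_n}N$, hence $v\in\tau$ and, by continuity of $Df$, $D_xf(v)=\lim_n D_{x_n}f(v_n)=0$; thus $K\subseteq\ker D_xf|_\tau$. Since each $D_{x_n}\left.f\right|_{S_1}$ is surjective, all the kernels $\ker D_{x_n}\left.f\right|_{S_1}$ have dimension $\dim S_1-\dim N'$, so $\dim K=\dim S_1-\dim N'=\dim\ker D_xf|_\tau$, whence $K=\ker D_xf|_\tau$. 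Finally $\ker D_x\left.f\right|_{S_2}=\{v\in T_xS_2:D_xf(v)=0\}\subseteq\{v\in\tau:D_xf(v)=0\}=\ker D_xf|_\tau=K$, which is exactly the Thom condition. Applying Corollary \ref{lifting} with this $X'=N'$, the canonical tube system on $N'$, and $\xi'$ then yields the required globally integrable lift of $\xi'$ to $X$.

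I expect the main obstacle to be the verification of the Thom condition, and within it the delicate point is the dimension count: one needs all the kernels $\ker D_{x_n}\left.f\right|_{S_1}$ to have the same dimension — which holds precisely because each $D_{x_n}\left.f\right|_{S_1}$ is genuinely surjective, not merely "submersive in the limit" — so that the Grassmannian limit $K$ inherits that dimension and the inclusion $K\subseteq\ker D_xf|_\tau$ is forced to be an equality. Once the trivial stratification of $N'$ is in place, everything else is formal.
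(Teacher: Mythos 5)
Your proposal is correct and takes the same route as the paper: reduce to Corollary \ref{lifting} by endowing $N'$ with the trivial one-stratum stratification, the only substantive point being that a proper stratified submersion onto a single stratum is automatically a Thom map. The paper discharges that point by simply citing \cite[II.3.3]{Gibson1976}, whereas your Whitney-(a)-plus-dimension-count argument correctly supplies a direct proof of the cited fact.
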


\cref{lifting2} is a consequence of \cref{lifting} since the Thom condition is satisfied in this case (see \cite[II.3.3]{Gibson1976}).
We also remark that these two corollaries, among other things, are used in \cite{Gibson1976} to prove the Thom-Mather isotopy lemmas.

Finally, we show how \cref{lifting2} can be used to construct a local geometric monodromy of a function in a specific way.
Let $\varphi\colon X\to \mathbb{S}^1$ be a locally trivial $C^0$- fibration with fiber $F=\varphi^{-1}(t_0)$. It is well known 
that $\varphi\colon X\to \mathbb{S}^1$ is $C^0$- equivalent to the fibration 
$\pi\colon\nicefrac{F\times \left[0,2\pi\right]}{\sim}\to \mathbb{S}^1$, where the relation is given 
by $(x,0)\sim\big(h(x),2\pi\big)$ for some homeomorphism $h:F\to F$ and $\pi([x,t])=t$.  As we have mentioned in the introduction,
such homeomorphism $h$ is called a 
\textit{geometric monodromy} of $\varphi$, although it is not smooth in general. Since there are some choices,
a geometric monodromy is not unique. However one can prove that its isotopy 
class is well defined, so the induced map on homology (or cohomology) is uniquely given by $\varphi$ and it is simply  called 
\textit{the monodromy} of $\varphi$.

In our case, given a complex analytic function $f\colon(X,x)\to(\C,0)$ there exist $\epsilon$ and $\eta$ with $0<\eta\ll\epsilon\ll1$ such that
\begin{equation}\label{fibration}
f\colon X\cap B_\epsilon\cap f^{-1}(\partial D_\eta)\to \partial D_\eta
\end{equation}
is a proper stratified submersion, for some Whitney stratification on the source and the trivial stratification on $\partial D_\eta$ 
(see \cite{Trang1976}).
By the Thom-Mather first isotopy lemma, \eqref{fibration} is a locally trivial $C^0$- fibration with fiber $F$. 

In fact, we have something more. We take on $\partial D_\eta$ the vector field of constant length $\eta$ 
and positive direction. By \cref{lifting2}, this vector field can be lifted to a stratified vector field $\xi$ 
on the source which is globally integrable. 

The flow of $\xi$ provides the local trivialisations of \eqref{fibration} 
and it follows that the geometric monodromy obtained in this way $h:F\to F$ is a stratified homeomorphism 
(that is, it preserves strata and the restriction on each stratum is a diffeomorphism). We call $h:F\to F$ the 
\emph{local geometric monodromy of $f$ at $x$ induced by $\xi$}.

In the next section we show that instead of a Euclidean ball $B_\epsilon$ we can take a convenient polydisc, 
which is better to proceed with the induction hypothesis.

\section{Privileged polydiscs}

In \cite{Trang1975}, instead of a usual Milnor ball $B$ for a complex analytic function $f:(\C^{n+1},x)\to(\C,0)$, it is considered a privileged polydisc $\Delta=D_1\times\dots\times D_{n+1}$ with respect to some generic choice of coordinates $z_1,\dots,z_{n+1}$ in $\C^{n+1}$. Here we show how to adapt this notion to the case of a function $f:(X,x)\to(\C,0)$ on a complex analytic set $X$.

Assume that $\dim(X,x)=n+1$ and that $(X,x)$ is embedded in $(\C^N,0)$. 
We take a representative $f:X\to \C$ and complex analytic Whitney stratifications in $X$ and 
$\mathbb{C}$ such that $f:X\to \C$ is a stratified function. 
We say that $z_1,\dots,z_N$ are \emph{generic coordinates} if for each $i=0,\dots,n$, the $(N-i)$-plane $H^i$ through the origin given by $\{z_1=\dots=z_i=0\}$ is transverse to all the strata of $X$ except, perhaps, the stratum $\{x\}$. 

We consider the set $X^i=\pi_i(X\cap H^i)\subset\C^{N-i}$, where 
$\pi_i$ is the projection onto the last $N-i$ coordinates, with the induced stratification and the function $f^i: X^i\to\C$ given by
\[
f^i(z_{i+1},\dots,z_N)=f(0,\dots,0,z_{i+1},\dots,z_N).
\]

A \emph{polydisc} centered at $0$ in $\C^N$ is a set of the form $\Delta=D_1\times\dots\times D_n\times B$ 
where $D_1,\dots,D_n\subset\C$ are  discs and $B\subset\C^{N-n}$ is a ball centered at $x$. We also denote by $\Delta^i=D_{i+1}\times\dots\times D_n\times B$ the corresponding polydisc in $\C^{N-i}$. Each polydisc $\Delta^i$ is considered with the obvious Whitney stratification given by taking all combinations of products of interiors and boundaries on the discs and the ball (see \cite[1.3]{Trang1975}). 

Observe that a polydisc has a ball in the product of its definition. This ball is necessary to have control on the codimension of $X$. More precisely, as we will work with the sets $X^i$ and $X^i\cap\Delta^i$, we need to stop taking sections as soon as $X^i$ is a curve and, at that point, we take a ball that completes the product structure we want to find (the so-called polydiscs).

\begin{definition} We say that $\Delta$ is a \emph{privileged} polydisc if for any smaller polydisc $\Delta'\subset \Delta$ centered at $0$ in $\C^N$, all the strata of $\big(\Delta'\big)^i$ are transverse to all the strata of $X^i$, for all $i=0,\dots,n$.
\end{definition}

For each privileged polydisc  $\Delta$, the set $X^i\cap\Delta^i$ has an induced Whitney stratification. By the curve selection lemma, the function $f^i:X^i\cap\Delta^i\to\C$ has an isolated critical value in the stratified sense at the origin in $\C$. So, $(f^i)^{-1}(b)$ is transverse to all the strata of $X^i\cap\Delta^i$, for all $b\in\C$ small enough. In particular, there exists $\eta>0$ small enough such that
\[
f^i: X^i\cap\Delta^i\cap (f^i)^{-1}(\partial D_\eta)\longrightarrow\partial D_\eta
\]
is a proper stratified submersion and hence, a locally $C^0$-trivial fibration homotopic to a Milnor fibration with a homotopy which preserves the fibres. This follows from the Thom-Mather first isotopy lemma and the fact that privileged polydiscs are good neighbourhoods relatively to $\left\{f=0\right\}$ in Prill's sense (cf. \cite{Prill1967}), see the end of \cite[Section 1]{Trang1975} for more details. In fact, this is the original definition of privileged polydisc in \cite{Trang1975} in the case $X=\C^{n+1}$. The existence of privileged polydiscs is proved in the next lemma:

\begin{lemma}\label{lem:polydisc}  Any small enough polydisc is privileged.
\end{lemma}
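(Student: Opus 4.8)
The plan is to reduce the statement to a transversality-genericity argument applied uniformly over the finitely many strata. First I would fix the generic coordinates $z_1,\dots,z_N$; by definition of genericity, for each $i=0,\dots,n$ the linear subspace $H^i=\{z_1=\dots=z_i=0\}$ is transverse to every stratum of $X$ other than possibly $\{x\}$. Consequently the intersections $X\cap H^i$ inherit Whitney stratifications, and so do their images $X^i=\pi_i(X\cap H^i)$ under the (linear, hence stratified) projection $\pi_i$. The key observation is that for a polydisc $\Delta'=D_1'\times\dots\times D_n'\times B'$, the strata of $(\Delta')^i$ are products of interiors and boundaries of the factors $D_{i+1}',\dots,D_n'$ and of the sphere $\partial B'$; transversality to the strata of $X^i$ is automatic for the open (interior) parts, so the only content is transversality of the \emph{boundary} pieces — finitely many real hypersurfaces (and their intersections) — to the finitely many strata of $X^i$.

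The main step is therefore a curve-selection / "small enough radius" argument: for a single disc factor $D_r'$ of radius $t$, the circle $\{|z_r|=t\}$ (and, more generally, any of the boundary combinations) fails to be transverse to some stratum $S$ of $X^i$ only for $t$ in a proper analytic subset of $(0,\epsilon)$ — equivalently, the set of "bad radii" accumulates to $0$ only if it does so along a real-analytic arc, which the curve selection lemma excludes provided we shrink the ambient polydisc. More precisely, I would argue by induction on $N-i$ (or on $n$): the sphere $\partial B'$ of radius $\epsilon'$ in $\C^{N-n}$ is transverse to all strata of $X^n$ for all small $\epsilon'$ by the standard Milnor–type argument (the distance-squared function to $x$ has no critical points on the strata in a punctured neighbourhood, by the curve selection lemma); then one adds the disc factors one at a time, at each stage using that a generic-radius circle is transverse to the finitely many strata already constructed, and that the set of radii for which this fails is a subanalytic subset of $(0,\epsilon)$ not accumulating at $0$. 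Taking the radii $\epsilon_1,\dots,\epsilon_n,\epsilon_B$ all sufficiently small then makes \emph{all} finitely many strata of $(\Delta')^i$ transverse to \emph{all} finitely many strata of $X^i$, simultaneously for every $i=0,\dots,n$, since there are only finitely many conditions.

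The point requiring the most care — and what I expect to be the main obstacle — is handling the projections $\pi_i$ and the nesting of the $X^i$ coherently: one must check that shrinking the polydisc in $\C^N$ shrinks each $\Delta^i$ in a compatible way and that the "bad radii" sets for different $i$ can be avoided by a \emph{single} choice of small radii, rather than $i$-dependent ones. This is where the finiteness of the stratification (so finitely many $i$, finitely many strata, finitely many boundary combinations) is essential, together with the fact that the genericity of the coordinates was arranged once and for all before choosing the polydisc. I would also invoke, as the paper does, that privileged polydiscs are good neighbourhoods in Prill's sense, so that the resulting fibrations are the Milnor fibrations; but for the lemma itself only the transversality bookkeeping is needed. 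With these pieces in place, choosing $\Delta$ small enough guarantees the defining condition for \emph{every} $\Delta'\subseteq\Delta$, because any such $\Delta'$ has radii bounded by those of $\Delta$, and the transversality conditions are open in the stratum data and closed under further shrinking.
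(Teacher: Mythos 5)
Your plan uses the same basic ingredients as the paper's proof (inducting on the number of disc factors, curve selection for each radius, finiteness of the data), so the overall strategy is sound, but the inductive step as you describe it has a genuine gap. You propose to check that ``a generic-radius circle is transverse to the finitely many strata already constructed,'' but those already-constructed strata (products $R_\alpha$ of boundaries/interiors of the previous factors, intersected with strata $S_\beta'$ of $X^i$) live in $\C^{N-i}$, whereas the transversality you now need is in $\C^{N-i+1}$, between the strata of $D_{\epsilon'}\times(\Delta^i)'$ and the strata $S_\beta$ of $X^{i-1}$. The paper bridges this dimension gap with an explicit tangent-space decomposition: one writes an arbitrary vector of $\C\times\C^{N-i}$ first using the cylinder transversality of $\partial D_{\epsilon'}\times\C^{N-i}$ with $S_\beta$ (coming from curve selection applied to $\rho(z)=|z_i|$, which gives it for \emph{all} $\epsilon'\le\epsilon$, not merely for generic $\epsilon'$), and then corrects the $\C^{N-i}$-component using the induction hypothesis that $R_\alpha$ is transverse to $S_\beta'$. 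This combination is the heart of the argument and is absent from your plan; without it, ``add the disc factors one at a time'' is just a restatement of the goal. Your ``bad radii form a proper analytic subset of $(0,\epsilon)$'' phrasing also breaks down for the multi-boundary strata you parenthetically invoke, since those depend on several radii at once; the paper never argues stratum-by-stratum over a bad locus in a product of intervals, it only ever deals with one new boundary circle per inductive step.

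You correctly identify the remaining obstacle — that the radii must be chosen uniformly over all $i$ and all smaller $\Delta'$ — but you do not resolve it. The resolution is precisely the choice above: the cylinder transversality is required of $\partial D_{\epsilon'}\times\C^{N-i}$ against the strata of $X^{i-1}$ in the full ambient $\C^{N-i+1}$, \emph{not} only at points of $X^{i-1}\cap(\C\times(\Delta^i)')$. This decouples the choice of $\epsilon_i$ from the other radii, so each radius can be shrunk independently and the defining condition is automatically inherited by every smaller polydisc; this is exactly what the remark following the lemma records. So: right skeleton, but the tangent-space combination in the inductive step and the ambient-cylinder formulation that makes the radii independent are both missing, and both are essential.
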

\begin{proof} We show by induction on $i=0,\dots,n$ that $f^i$ has a privileged polydisc $\Delta^i$. The case $i=n$ is obvious since a privileged polydisc is nothing but a Milnor ball. Assume $f^i$ has a privileged polydisc $\Delta^i$. We shall find a disc $D_\epsilon$ such that $D_\epsilon\times\Delta^i$ is a privileged polydisc for $f^{i-1}$. We use the function $\rho:\C^{N-i+1}\to \mathbb R$ given by $\rho(z)=|z_i|$.
By the curve selection lemma we can find $\epsilon>0$ such that for any $0<\epsilon'\leq\epsilon$, 
$\partial D_{\epsilon'}\times\mathbb{C}^{N-i}$ is transverse to each stratum of $X^{i-1}$.

Consider the polydisc $D_{\epsilon'}\times\left(\Delta^i\right)'$, for a polydisc $\left(\Delta^i\right)'$ 
contained in $\Delta^i$ and $\epsilon'\leq\epsilon$. We have two types of strata: $\mathring{D}_{\epsilon'}\times R_\alpha$ and $\partial D_{\epsilon'}\times R_\alpha$, for some stratum $R_\alpha$ 
of $\left(\Delta^i\right)'$. On the other hand, if we consider a stratum $S_\beta$ of $X^{i-1}$,  and we take the hyperplane section to get $X^i$, it gives the stratum $S_\beta'$ of $X^i$.

By induction hypothesis, $R_\alpha$ is transverse to $S_\beta'$, that is,
\begin{equation}\label{transv}
T_z R_\alpha + T_z S_\beta'= \mathbb{C}^{N-i},
\end{equation}
for all $z\in R_\alpha\cap S_\beta'$. This obviously implies that
$$\C\times T_z R_\alpha +T_{(t,z)} S_\beta= \C\times \mathbb{C}^{N-i},$$
which gives the transversality between $\mathring{D}_{\epsilon'}\times R_\alpha$ and $S_\beta$ at $(t,z)$.

Moreover, the choice of $\epsilon$ implies that 
\[
T_t \partial D_{\epsilon'}\times\mathbb{C}^{N-i} + T_{(t,z)} S_\beta =\C\times\mathbb{C}^{N-i},
\]
for all $(t,z)\in (\partial D_{\epsilon'}\times \C^{N-i})\cap (V\times S_\beta)$. Therefore, any vector $(u,v)\in \C\times\mathbb{C}^{N-i}$ can be written as $(u,v)=(u_1,v_1)+(u_2,v_2)$, for some $(u_1,v_1)\in  T_t \partial D_{\epsilon'}\times\mathbb{C}^{N-i}$ 
and $(u_2,v_2)\in T_{(t,z)} S_\beta$. If $z\in R_\alpha$, 
we also have by \eqref{transv} that $v_1=w_1+w_2$, with $w_1\in T_z R_\alpha$ and $w_2\in T_z S_\beta'$. We get
\[
(u,v)=(u_1,w_1)+(0,w_2)+(u_2,v_2),
\]
with $(u_1,w_1)\in T_t \partial D_{\epsilon'}\times T_z R_\alpha$ and $(0,w_2)+(u_2,v_2)\in T_{(t,z)} S_\beta$. This shows that $\partial D_{\epsilon'}\times R_\alpha$ is also transverse to $S_\beta$.
\end{proof}

\begin{remark}
We see in the proof of \cref{lem:polydisc} that the choice of the radius of each disc of $\Delta$ is independent of the radii of the other discs. The reason of this independence is that we were asking that $\partial D_{\epsilon'}\times\mathbb{C}^{N-i}$ has to be transverse to each stratum of $X^{i-1}$ at any point instead of being transverse only at points on $X^{i-1}\cap \mathbb{C}\times \Delta^i$, which would have given $D_{\varepsilon'}$ a relation with $\Delta^i$ that restricts it. However, as presented in the proof, there could be a relation between the radii of the discs and the radius of the ball.\end{remark}

\section{Proof of the main theorem}
%

In this section we give the proof of \cref{nofixedpoints}. The proof is by induction on the dimension of $(X,x)$. To do this, 
we need the carrousel construction in \cite{Trang1975} of the second named author. We also refer to \cite{LNS} for a detailed 
construction of the carrousel for a general complex analytic germ of plane curve $(C,0)$. 
In our case, we apply this construction for the Cerf's diagram 
$C=\Delta_\ell(f,x)$ of a holomorphic function $f:(X,x)\to(\C,0)$ with respect to a generic linear form $\ell$. The key point 
here is that if $f\in\mathfrak m_{X,x}^2$, then all the branches of $C$ are tangent to the axis $\{v=0\}$ at the origin, where $u,v$ are the 
coordinates of the plane $\C^2$ (see \cref{prop tangent}).

\begin{lemma}[{{\it cf.} \cite[3.2.2]{Trang1975}}]\label{carrousel} Let $(C,0)$ be a germ of complex analytic plane curve whose tangent cone is the axis $\{v=0\}$.
There exist small enough discs $D$ and $D_\eta$ centered at the origin in $\C$ and a smooth vector field $\omega$ on the solid torus 
$D\times \partial D_\eta$ such that:
\begin{enumerate}[label=(\roman*)]
	\item \label{carrouseli}The projection onto the second component of $\omega$ gives the unit tangent vector field over
	$\partial D_\eta$ (\textit{i.e.}, the tangent field of length $\eta$, the radius of $D_\eta$, in the positive direction),
	\item \label{carrouselii} the restriction to $\left\{0\right\}\times\partial D_\eta$ is indeed the unit vector field,
	\item \label{carrouseliii}the vector field $\omega$ is 
	tangent to $(D\times \partial D_\eta)\cap\{\delta=\epsilon\}$ for all $\epsilon\in\C$ small enough, where $\delta=0$ is a reduced equation of $C$, and
	\item \label{carrouseliv}the only integral curve that is closed after a loop in $\partial D_\eta$ is $\left\{0\right\}\times\partial D_\eta$.
\end{enumerate}
\end{lemma}

The discs $D$ and $D_\eta$ in \cref{carrousel} are chosen small enough so that there is a disc $D_1$ containing $D$ strictly such that 
$\big(D_1\times \{0\}\big)\cap C=\{0\}$ and such that $\{v=t\}$, 
for $\eta\geq |t|>0$, intersects the curve $C$ in $\big(D\times \{0\}, C\big)_0$ points
in $D\times D_\eta$ where $(\bullet,\bullet)_0$  is the local intersection number at $0$ (see \cref{fig:Cerf}).

\begin{figure}[ht]
	\centering
		\includegraphics[scale=0.7]{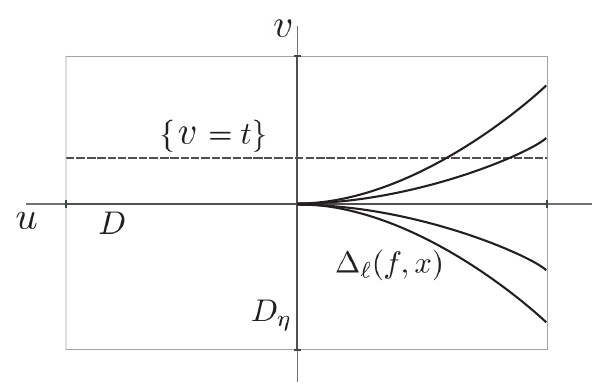}
	\caption{Cerf's diagram and the setting to construct the carrousel.}
	\label{fig:Cerf}
\end{figure}

The geometrical meaning of the carrousel is the following (see \cref{fig: carrousel}): we first take a representative $C$ of the plane curve on some open neighbourhood $W$ of the origin in the plane 
$\C^2$. Let $L$ be the intersection of $W$ with the axis $\{u=0\}$. We consider $W$ with the Whitney stratification given by the strata 
$W\setminus(C\cup L)$, $C\setminus\{0\}$, $L\setminus\{0\}$ and $\{0\}$ and the function germ 
$\pi_2:(W,0)\to(\C,0)$ given by $\pi_2(u,v)=v$. The choice of $D$ and $D_\eta$ is made so that 
\[
\pi_2: D\times \partial D_\eta\to \partial D_\eta
\]
is a proper stratified submersion with the induced stratification in $D\times \partial D_\eta$. By \cref{carrouseli,carrouselii,carrouseliii} in \cref{carrousel}, $\omega$  is a stratified vector field on $D\times \partial D_\eta$ which is a lifting of the unit tangent vector field on $\partial D_\eta$. Hence, its flow provides a local geometric monodromy $h:D\times\{t\}\to D\times\{t\}$ for some $t\in\partial D_\eta$, which preserves the point $(0,t)$ and the finite set $C\cap \big(D\times\{t\}\big)$. By condition (iv), the only fixed point of $h$ is $(0,t)$.

\begin{figure}[htb]
	\centering
		\includegraphics[width=0.95\textwidth]{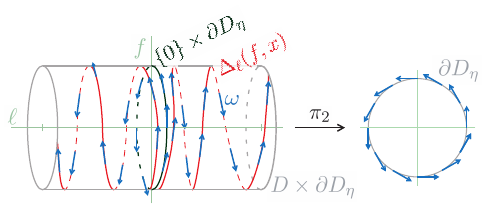}
	\caption{Representation of a carrousel $\omega$.}
	\label{fig: carrousel}
\end{figure}

\medskip
Now we can give the proof of our main result:

\begin{proof}[Proof of \cref{nofixedpoints}]
Assume that $(X,x)\subset (\C^N,x)$. We take a privileged polydisc $\Delta$ in $\C^N$ at $x$ and a small 
disc $D_\eta$ in $\C$ at $0$ such that the restriction
\[
f:X\cap \Delta\cap f^{-1}(\partial D_\eta)\to \partial D_\eta
\]
is a proper stratified submersion. We claim that there exists a stratified vector field $\xi$ on $X\cap\Delta\cap f^{-1}(\partial D_\eta)$ 
which is a lifting of the unit vector field $\theta$ on $\partial D_\eta$ whose flow provides a local geometric 
monodromy with no fixed points. We prove this by induction on the dimension of $X$ at $x$.

Assume first that $\dim(X,x)=1$. Let $X_1,\dots,X_r$ be the analytic branches of $X$ at $x$. 
Then $X\cap \Delta\cap f^{-1}(\partial D_\eta)$ is the disjoint union of all the sets 
$X_i\cap \Delta\cap f^{-1}(\partial D_\eta)$, $i=1,\dots,r$. Hence, it is enough to show the claim 
in the case that $X$ is irreducible at $x$. Let $n:\tilde X\to X$ be the normalization of $X$ at $x$.
Since $f\in\mathfrak m_{X,x}^2$, we can take an analytic extension $\overline f:(\C^N,x)\to(\C,0)$ 
such that $F\in\mathfrak m_N^2$. After a reparametrization, we can assume that 
$\tilde X$ is an open neighbourhood of $0$ in $\C$, $\left\{0\right\}=n^{-1}(x)$ and $F\circ n(s)=s^{k}$, for some $k\ge 2$. In this case, $\theta$ lifts in a unique way by the map $F\circ n$ and has a local geometric monodromy with no fixed points. But $n$ induces a diffeomorphism on $\tilde X\setminus\{0\}$ onto $X\setminus\left\{x\right\}$, so we have also a unique lifting on 
$X\cap \Delta\cap f^{-1}(\partial D_\eta)$ whose geometric monodromy has no fixed points. 

Now we assume the claim is true when $\dim (X,x)=n$ and prove it in the case that $\dim (X,x)=n+1$. 
Let $\ell:\C^N\to\C$ be a generic linear form and consider the map $\Phi=(\ell,f)$. We have a commutative diagram as follows:
\[
\begin{tikzcd}X\cap \Delta\cap\Phi^{-1}(D\times\partial D_\eta)\ar[r,"\Phi"] &D\times\partial D_\eta\ar[r,"\pi_2"]&\partial D_\eta\\
X\cap \Delta\cap \ell^{-1}(0)\cap f^{-1}(\partial D_\eta)\ar[r,"{(0,f)}"]\ar[u,hook] &\{0\}\times\partial D_\eta\ar[ru,"\pi_2"']\ar[u,hook]&\end{tikzcd},
\]
where the vertical arrows are the inclusions and $\pi_2$ is the projection onto the second component. 
Here we choose the polydiscs $\Delta$ and $D\times D_\eta$ small enough such that $\Phi$ is a Thom proper map (see \cref{lem:polydisc,ThomGeneral,rem:Thomfl}). 
The stratification in $D\times\partial D_\eta$ is given by the strata $D\times\partial D_\eta\setminus(C\cup L)$, 
$(D\times\partial D_\eta)\cap C$ and $L$, where $L=\{0\}\times \partial D_\eta$ and $C=\Delta_\ell(f,x)$ is the Cerf's diagram.

By induction hypothesis, there exists a stratified vector field $\xi_1$ on $X\cap\Delta\cap \ell^{-1}(0)\cap f^{-1}(\partial D_\eta)$ 
which is a lifting of $\theta$ and whose  geometric monodromy has no fixed points. If $C$ is empty, the claim 
is obvious by \cref{emptycerf}, so we can assume that $C$ is not empty. 

By the carrousel of \cref{carrousel}, there exists a stratified vector field $\omega$ on $D\times\partial D_\eta$ which 
satisfies \cref{carrouseli,carrouselii,carrouseliii,carrouseliv} of the lemma.  Since $\omega$ is a lifting of $\theta$,  it is globally integrable 
by \cref{vf4}. Moreover, $\omega$ is not zero along $L$ and $(D\times\partial D_\eta)\cap C$, so we can use 
the flow of $\omega$ to construct a weakly controlled tube system $\mathcal T'$ of $D\times\partial D_\eta$ such that 
$\omega$ is weakly controlled. By \cref{lifting}, $\omega$ lifts to a stratified vector field $\xi$ on 
$X\cap \Delta\cap\Phi^{-1}(D\times\partial D_\eta)$ which is globally integrable.
Moreover, by using a partition of unity, we can construct $\xi$ in such a way that it coincides with $\xi_1$ on 
$X\cap\Delta\cap \ell^{-1}(0)\cap f^{-1}(\partial D_\eta)$.

Let $F=X\cap\Delta \cap f^{-1}(t)$, with $t\in\partial D_\eta$ and consider the geometric monodromy $h:F\to F$ 
induced by $\xi$. On one hand, $\xi$ is an extension of $\xi_1$, so $h\big(F\cap\ell^{-1}(0)\big)=F\cap\ell^{-1}(0)$ and 
$h$ has no fixed points on $F\cap\ell^{-1}(0)$. On the other hand, \cref{carrouseliv} of \cref{carrousel} implies 
that $h$ does not have fixed points on $F\setminus\ell^{-1}(0)$ either. This completes the proof.
\end{proof}

%


%

%
%
%
%
%
%
 


The proof relied on the hypothesis of $f$ being in $\mathfrak{m}_{X,x}^2$, and actually this hypothesis is necessary. Here we give a couple of examples which illustrate this fact.

\begin{example}\label{triple:ex} Let $(C,0)$ be the ordinary triple point singularity in $(\C^3,0)$. This is equal to the union of the three coordinate axis in $\C^3$ and the defining equations are given by the $2\times 2$-minors of the matrix
\[
M=\left(
\begin{array}{ccc}
x  & y  & z  \\
y  & z  & x  
\end{array}
\right)
\]
This gives to $(C,0)$ a structure of isolated determinantal singularity in the sense of \cite{NOT}. According also to \cite{NOT}, we can construct a determinantal smoothing of $(C,0)$ by taking the $2\times 2$-minors of $M_t=M+tA$, where $A$ is a generic $2\times 3$-matrix with coefficients in $\C$ and $t\in\C$. 

In fact, let
\[
A=\left(
\begin{array}{ccc}
0  & 1  & 0  \\
0  & 0  & 0  
\end{array}
\right)
\]
and let $(X,0)$ be the surface in $(\C^3\times\C,0)$ defined as the zero set of the $2\times 2$-minors of $M_t$. The projection $f:(X,0)\to(\C,0)$, $f(x,y,z,t)=t$ provides a flat deformation whose special fibre is $(C,0)$ and whose generic fibre $F=f^{-1}(t)$, for $t\ne0$, is a smooth curve. We can see $F$ as a kind of ``determinantal Milnor fibre'' of $(C,0)$. 

It follows from \cite[page 279]{BG} that $F$ is diffeomorphic to a disk with two holes (as in \cref{triple}) and that the monodromy $h_*:H_1(F;\Z)\to H_1(F;\Z)$ is the identity. Since $H_1(F;\Z)\cong\Z^2$, the Lefschetz number is $-1$, and hence any local geometric monodromy must have a fixed point. A simple computation shows that $f\notin\mathfrak{m}_{X,x}^2$ in this example.

\begin{figure}[ht]
	\centering
		\includegraphics[width=0.85\textwidth]{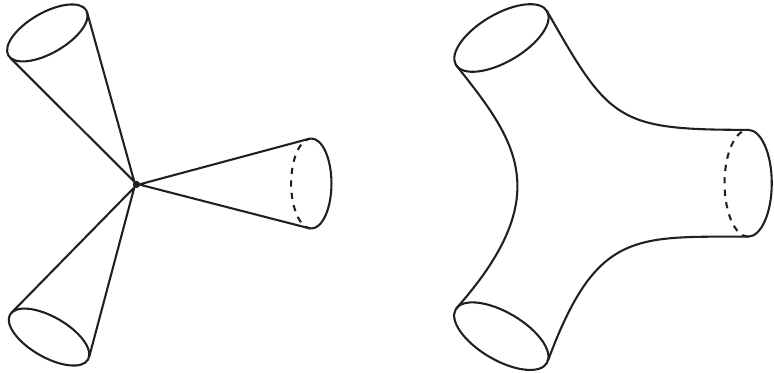}
		\caption{The ordinary triple point singularity and its determinantal Milnor fibre.}
	\label{triple}
\end{figure}
\end{example}

\begin{example} Consider the $A_4$ plane curve singularity $(C,0)$ whose equation in $(\C^2,0)$ is $x^5-y^2=0$. 
The monodromy of the classical Milnor fibre of $(C,0)$ is well known and we will not discuss it. Instead, we look 
at the monodromy of the disentanglement of $(C,0)$ in Mond's sense (see \cite[Chapter 7]{Mond-Nuno2020}). 
We see $(C,0)$ as the image of the map germ $g_0\colon(\C,0)\to(\C^2,0)$ given by $g_0(s)=(s^2,s^5)$, which 
has an isolated instability at the origin. 

Since we are in the range of Mather's nice dimensions, we can take a 
stabilisation, that is, a 1-parameter unfolding $G\colon (\C\times \C,0)\to(\C^2\times\C,0)$, $G(s,t)=(g_t(s),t)$ 
such that for any $t\ne0$, $g_t$ has only stable singularities. By definition, the \emph{disentanglement} $F$ 
is the image of the map $g_t$ intersected with a small enough ball $B$ in $\C^2$ centered at the origin 
and $t$ small enough. Since $F$ is 1-dimensional and connected, it has the homotopy type of a bouquet of spheres 
(this is true also in higher dimensions by a theorem due to Lê) of dimension 1. The number of such spheres is called the 
\emph{image Milnor number} and is denoted by $\mu_I(g_0)$. 

Observe that $F$ is also the generic fibre 
of the function $f:(X,0)\to(\C,0)$ where $(X,0)$ is the image of $G$ in $(\C^2\times\C,0)$ and $f(x,y,t)=t$. 
We are interested in the local monodromy of $f$ at the origin.

In our case, we take $g_t(s)=(s^2,s^5+ts)$. It is easy to see that for $t\ne0$, $g_t$ is an immersion with  
two transverse double points $p=g_t(a_1)=g_t(a_2)$ and $q=g_t(b_1)=g_t(b_2)$ where $a_1,a_2,b_1,b_2$ 
are the four roots of $s^4+t=0$, with $a_1=-a_2$ and $b_1=-b_2$. Hence, $g_t$ defines a stabilisation of $g_0$. 
Observe that the number of double points coincides with the delta invariant $\delta(C,0)=2$. 
The disentanglement $F$ is the image of $g_t$ and is homeomorphic to the quotient of a closed 2-disk $D_t$ 
under the relations $a_1\sim a_2$ and $b_1\sim b_2$ (see \cref{pic1}). Thus, $F$ has the homotopy 
type of $S^1\vee S^1$ and $\mu_I(g_0)=2$. 

\begin{figure}[ht]
	\centering
		\includegraphics[width=0.8\textwidth]{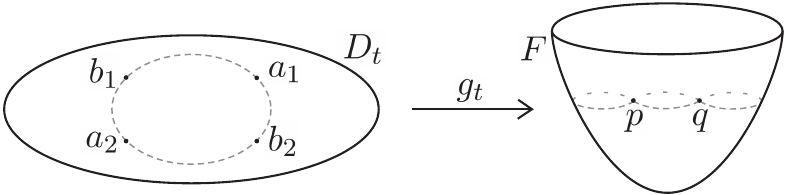}
		\caption{The map $g_t$ and the double points, $a_1,b_1,a_2$ and $b_2$.}
	\label{pic1}
\end{figure}

The locally $C^0$-trivial fibration is the restriction $f:X\cap(B\times S^1_\eta)\to S^1_\eta$, for a small enough $\eta>0$. 

In order to construct a geometric monodromy $h\colon F\to F$ it is enough to find a 1-parameter group of stratified homeomorphisms 
$h_\theta\colon X\cap(B\times S^1_\eta)\to X\cap(B\times S^1_\eta)$, with $\theta\in\mathbb R$, which make the following diagram commutative
\[
\begin{tikzcd}X\cap(B\times S^1_\eta) \ar[r,"f"]\ar[d,"h_\theta"']& S^1_\eta\ar[d,"r_\theta"]\\
X\cap(B\times S^1_\eta) \ar[r,"f"]& S^1_\eta\end{tikzcd},
\]
where $r_\theta(t)=e^{i\theta} t$. In this situation, $h:F\to F$ is obtained as the restriction of $h_{2\pi}$.

Since $(C,0)$ is weighted homogeneous with weights $(5,2)$, instead of a Euclidean ball in $\C^2$ it is better 
to consider the (non-Euclidean) ball $B$ given by $|x|^5+|y|^2\le 1$. Thus, $C\cap B=g_0(D)$, where $D$ 
is the disk in $\C$ given by $|s|^{10}\le 1/2$. For $t\ne0$, $F=g_t(D_t)$, where now $D_t=g_t^{-1}(B)$ is 
the disk in $\C$ given by 
\[
|s|^{10}+|s|^2|s^4+t|^2\le 1.
\]

Given a point $(x,y,t)\in X$, we have $(x,y,t)=G(s,t)$ for some $s\in\C$. We define $h_\theta\colon X\to X$ as
\[h_\theta(G(s,t))=G\left(e^{\frac{i\theta}{4}}s,e^{i\theta} t\right).
\]
Now, we have to check that, indeed, this gives a group of stratified homeomorphisms. We consider in $X$ the stratification given by $\{X\setminus Y,Y\}$, where $Y$ is the double point curve with equations $x^2+t=0$, $y=0$.
Since $G$ is an embedding on $X\setminus Y$, $h_\theta$ is well defined and is a diffeomorphism on $X\setminus Y$. When $(x,y,t)\in Y$ 
we have $G(s,t)=(s^2,0,t)$, with $s^2=x$ and $s^4+t=0$. It follows that 
\[
h_\theta(x,0,t)=G\left(e^{\frac{i\theta}{4}}s,e^{i\theta} t\right)=\left(e^{\frac{i\theta}{2}}s^2,0,e^{i\theta} t\right)=\left(e^{\frac{i\theta}{2}}x,0,e^{i\theta} t\right),
\]
and $\left(e^{i\theta/2}x\right)^2+e^{i\theta} t=e^{i\theta}(x^2+t)=0$. Thus, $h_\theta$ is also well defined on $Y$, $h(Y)=Y$ 
and the restriction $h\colon Y\to Y$ is a diffeomorphism. It is also clear that $h_\theta\colon X\to X$ and its inverse are both 
continuous, so it is a stratified homeomorphism.
It only remains to show that $h_\theta\big(X\cap(B\times S^1_\eta)\big)=X\cap(B\times S^1_\eta)$, because we have to work with a specific representative, but this a consequence of the equality:
\[
\left|e^{\frac{i\theta}{4}}s\right|^{10}+\left|e^{\frac{i\theta}{4}}s\right|^2\left|\left(e^{\frac{i\theta}{4}}s\right)^4+e^{i\theta}t\right|^2=
|s|^{10}+|s|^2|s^4+t|^2.
\]
The geometric monodromy $h\colon F\to F$ is now the restriction of $h_{2\pi}$, which gives $h(g_t(s))=g_t\left(e^{i\pi/2}s\right)$, 
that is, it is obtained by a $\pi/2$-rotation in the disk $D_t$.

To finish, we compute $h_*\colon H_1(F;\Z)\to H_1(F;\Z)$. We recall that $F$ is homeomorphic to the quotient of $D_t$ under 
the relations $a_1\sim a_2$ and $b_1\sim b_2$. The four points $a_1,a_2,b_1,b_2$ are on a square contained in the interior 
of $D_t$ and centered at the origin, which is obviously invariant under the $\pi/2$-rotation. We denote by $a,b,c,d$ the four 
edges of the square as in \cref{pic2}.

\begin{figure}[H]
	\centering
		\includegraphics[width=1.00\textwidth]{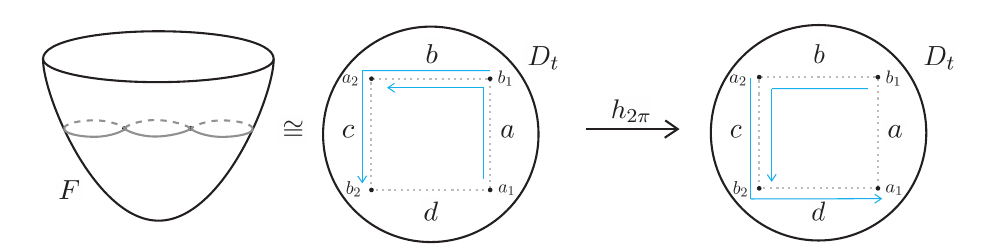}
		\caption{Monodromy of the fiber $F$.}
	\label{pic2}
\end{figure}

We take the cycles $a+b$ and $c+d$ as a basis of $H_1(F;\Z)$. Obviously, $h_*(a+b)=b+c$ and $h_*(b+c)=c+d=-(a+b)$ 
so the matrix of $h_*$ with respect to this basis is:
\[
\left(
\begin{array}{cc}
1  & 0  \\
0  & -1   
\end{array}
\right)
\]
The Lefschetz number is $1$ and hence, any local geometric monodromy must have a fixed point. 
In fact, in our construction there is exactly one fixed point, namely, the origin of the disk $D_t$ which is invariant 
under the $\pi/2$-rotation. As in \cref{triple:ex}, it is not difficult to check that $f\notin\mathfrak{m}_{X,x}^2$.

\end{example}

\section{Applications}

The first application of \cref{nofixedpoints} is a new proof of the following result, originally given by A'Campo. It can be used to show that any hypersurface $(X,x)$ in $\C^{n+1}$ with smooth topological type, must be smooth. 


\begin{corollary}[\textit{cf.} {\cite[Theorem 3]{ACampo1973}}]\label{cor:acampo3}
Let $X\subset \C^{n+1}$ be a germ of a hypersurface, not necessarily smooth at $x\in X$. If the Milnor fiber $F_x$ of $X$ at $x$ has trivial reduced homology with complex coefficients, $\tilde{H}_i(F_x;\C)\cong 0$, then $x$ is a smooth point of $X$.
\end{corollary}
\begin{proof}
Let $f:(\C^{n+1},x)\to(\C,0)$ be the holomorphic germ which gives a reduced equation of $(X,x)$. The Lefschetz number of the local monodromy of $f$ is 1 and, hence, $f\notin \mathfrak{m}_{\C^{n+1},x}^2$, by \cref{nofixedpoints}.
\end{proof}

We recall that two germs of complex spaces $(X,x)$ and $(Y,y)$ in $\C^{n+1}$ have the same topological type if there exists a homeomorphism $\varphi:(\C^{n+1},x)\to(\C^{n+1},y)$ such that $\varphi(X,x)=(Y,y)$.

\begin{corollary}\label{Mumford-type}
Let $(X,x)$ be a germ of hypersurface in $\C^{n+1}$. If $(X,x)$ has the topological type of a smooth hypersurface, then $(X,x)$ is smooth.
\end{corollary}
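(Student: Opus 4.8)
The plan is to suppose $(X,x)$ is a hypersurface in $\C^{n+1}$ having the topological type of a smooth hypersurface, and derive that $(X,x)$ must actually be smooth. Write $(X,x)=(g^{-1}(0),x)$ for a reduced $g\in\mathcal O_{\C^{n+1},x}$. If $g\notin\mathfrak m_{\C^{n+1},x}^2$ then $x$ is a regular point of $g$ and $(X,x)$ is already smooth, so assume for contradiction that $g\in\mathfrak m_{\C^{n+1},x}^2$, i.e.\ $x$ is a (possibly non-isolated) critical point of $g$. The key is to compare the local monodromy of $g$ at $x$ with that of a linear form at a smooth point. Since $(X,x)$ has the topological type of a smooth hypersurface, there is a homeomorphism $\varphi\colon(\C^{n+1},x)\to(\C^{n+1},0)$ carrying $(X,x)$ onto a smooth hypersurface germ, which we may take to be $(\{w_1=0\},0)$ after a linear change. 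The Milnor fibration of $g$ at $x$ is determined, up to the relevant notion of equivalence, by the embedded topological type of $(X,x)\subset(\C^{n+1},x)$; hence the Milnor fiber $F$ of $g$ at $x$ is homotopy equivalent to the Milnor fiber of $w_1$ at $0$, which is contractible, and the geometric monodromy of $g$ is topologically conjugate to that of $w_1$, which is the identity on a contractible fiber.

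Next I would run the main theorem against this. Because $g\in\mathfrak m_{\C^{n+1},x}^2$, Theorem \ref{nofixedpoints} applies (with $X=\C^{n+1}$, which certainly is a complex analytic space): there is a local geometric monodromy $h\colon F\to F$ of $g$ at $x$ with no fixed points. By the Lefschetz fixed point theorem (as invoked in the Introduction, cf.\ \cite[p.~179]{Hatcher2002}), the Lefschetz number $\Lambda(h)=\sum_i(-1)^i\tr\big(h_*\colon H_i(F;\Q)\to H_i(F;\Q)\big)$ must vanish. But $F$ has the homology of a point: $H_0(F;\Q)=\Q$ and $H_i(F;\Q)=0$ for $i>0$ (this is where smooth topological type is used — it forces $F$ contractible, or at least $\Q$-acyclic). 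Therefore $\Lambda(h)=1\neq 0$, a contradiction. Hence $g\notin\mathfrak m_{\C^{n+1},x}^2$ and $(X,x)$ is smooth.

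The one point requiring care — and the main obstacle — is the topological invariance statement that the (homotopy type of the) Milnor fiber of $g$ at $x$ depends only on the embedded topological type of $(X,x)$ in $(\C^{n+1},x)$, so that smooth topological type indeed forces $F$ to be $\Q$-acyclic. This is classical: the Milnor fiber can be recovered, up to homotopy, from a regular neighborhood of $X$ in a small ball (equivalently, from the complement $B_\varepsilon\setminus X$ together with the boundary data), and all of this data is a topological invariant of the pair $(B_\varepsilon, X\cap B_\varepsilon)$; under a homeomorphism of pairs it is carried to the corresponding data for the smooth germ, whose Milnor fiber is a disc. One must also make sure the homeomorphism $\varphi$ can be taken to send small balls around $x$ to neighborhoods of $0$ co-final in the system of Milnor balls, which is standard. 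Once this invariance is granted, the argument is immediate: Theorem \ref{nofixedpoints} gives a fixed-point-free $h$, Lefschetz forces $\Lambda(h)=0$, and $\Q$-acyclicity of $F$ gives $\Lambda(h)=1$, the desired contradiction. (Note this recovers, and slightly repackages, the Lê--Ramanujam type statement that a hypersurface with the homotopy type, in the appropriate sense, of a smooth one must be smooth; the novelty is that it is deduced directly from the no-fixed-point theorem rather than from $\mu=0$.)
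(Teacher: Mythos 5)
Your argument is correct and follows essentially the same route as the paper: reduce to showing that a reduced equation $g$ for $(X,x)$ cannot lie in $\mathfrak m_{\C^{n+1},x}^2$, using Theorem~\ref{nofixedpoints} plus Lefschetz against the contractibility (or $\Q$-acyclicity) of the Milnor fiber. The one step you flag as "requiring care" --- that the homotopy type of the Milnor fiber is determined by the embedded topological type of $(X,x)\subset(\C^{n+1},x)$, so that smooth topological type forces a contractible fiber --- is precisely what the paper dispatches by citing L\^e's proposition (\cite[Proposition, p.~261]{Trang1973b}); the paper then invokes \cite[Theorem~3]{ACampo1973} and remarks that this is itself a consequence of Theorem~\ref{nofixedpoints}, which is the Lefschetz computation you carry out directly.
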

\begin{proof}
If $(X,x)$ has the topological type of a smooth hypersurface then its Milnor fibre has trivial reduced homology by
\cite[Proposition, p. 261]{Trang1973b}. This implies that $(X,x)$ is smooth by \cref{cor:acampo3}.
\end{proof}

This corollary is related to Zariski's multiplicity conjecture \cite{Zariski1971} which claims that two hypersurfaces in $\C^{n+1}$ with the same topological type have the same multiplicity. Since a hypersurface is smooth if and only if it has multiplicity 1, \cref{Mumford-type} is just a particular case of the conjecture. Zariski showed the conjecture for plane curves but it remains still open in higher dimensions. Another related result is Mumford's theorem \cite{Mumford1961} which states that if $X$ is a normal surface and $X$ is a topological manifold at $x\in X$, then $X$ is smooth at $x$.

Our second application is a no coalescing theorem for families of functions defined on spaces with Milnor property. In \cite{Trang1973}, the second named author showed the following interesting application of  A'Campo's theorem 
(see also \cite{Gabrielov,Lazzeri}). Let $\{H_t\}_{t\in\mathbb C}$ be an analytic family of hypersurfaces defined 
on some open subset $U\subset\mathbb C^n$ with only isolated singularities. Take $B$ a Milnor ball for $H_0$ 
around a singular point $x_0\in H_0$ and assume for all $t$ small enough, the sum of the Milnor numbers of all 
the singular points of $H_t$ in $B$ is constant, that is,
\[
\sum_{x\in H_t\cap B}\mu(H_t,x)=\mu(H_0,x_0).
\]
Then $H_t\cap B$ contains a unique singular point $x$ of $H_t$. The purpose of this section is to prove an adapted 
version of this result in a more general context, namely, for Milnor spaces in the sense of \cite{Hamm-Le-Handbook}:

\begin{definition}
A \emph{Milnor space} is a reduced complex space $X$ such that at each point $x\in X$, the rectified homotopical 
depth $\rhd(X,x)$ is equal to $\dim(X,x)$.
\end{definition}

We refer to \cite{Hamm-Le-Handbook} for the definition of the rectified homotopical 
depth and basic properties of Milnor spaces. In general, $\rhd(X,x)\le \dim(X,x)$, so Milnor spaces are those whose rectified homotopical 
depth is maximal at any point. Some important properties are the following:
\begin{enumerate}
\item any smooth space $X$ is a Milnor space,
\item any Milnor space $X$ is equidimensional,
\item if $X$ is a Milnor space and $Y$ is a hypersurface in $X$ (i.e. $Y$ has codimension one and is defined locally in $X$ by one equation), then $Y$ is also a Milnor space.
\end{enumerate}
As a consequence, any local complete intersection $X$ (not necessarily with isolated singularities) is a Milnor space.
Our setting is motivated by the following theorem due to Hamm 
and Lê (see \cite[Theorem 9.5.4]{Hamm-Le-Handbook}):

\begin{theorem}\label{Hamm-Le} Let $(X,x)$ be a germ of Milnor space and assume that $f\colon(X,x)\to(\mathbb C,0)$ has an isolated critical 
point in the stratified sense. Then the general fibre $F$ of $f$ has the homotopy type of a bouquet of spheres 
of dimension $\dim(X,x)-1$.
\end{theorem}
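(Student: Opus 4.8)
Write $d=\dim(X,x)$, so that the general fibre $F$ of $f$ is a complex analytic space of pure dimension $d-1$. The plan is to reduce the theorem to two facts about $F$ and then combine them by elementary homotopy theory: (a) $F$ has the homotopy type of a finite CW-complex of real dimension $\le d-1$; and (b) $F$ is $(d-2)$-connected. Granting (a) and (b), the Hurewicz theorem gives $\widetilde H_i(F;\Z)=0$ for $i<d-1$, and since $F$ has the homotopy type of a complex of dimension $d-1$ the group $H_{d-1}(F;\Z)$ is free of some finite rank $r$. Choosing $r$ maps $S^{d-1}\to F$ representing a basis and taking their wedge produces a map $\bigvee_r S^{d-1}\to F$ which is an isomorphism on all homology groups; both spaces being simply connected (this uses (b) when $d\ge 3$, the cases $d\le2$ being immediate from (a) and (b) themselves), Whitehead's theorem shows it is a homotopy equivalence. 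Hence $F$ is a bouquet of $(d-1)$-spheres once (a) and (b) are established.

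Fact (a) uses only that $X$ is analytic. Shrinking the representative, choose the Milnor ball $B_\epsilon$ so that $X\cap B_\epsilon$ is a Stein space, being a closed analytic subset of a Stein open subset of $\C^N$. For a regular value $t$ of $f$ close enough to $0$, the Milnor fibre $F=X\cap B_\epsilon\cap f^{-1}(t)$ is then a closed analytic subspace of a Stein space, hence itself Stein, of pure dimension $d-1$. By the classical theorem on the homotopy type of Stein spaces, a Stein space of complex dimension $m$ has the homotopy type of a finite CW-complex of real dimension $\le m$; applied with $m=d-1$ this gives (a).

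Fact (b) is the heart of the matter and the only place where the Milnor-space hypothesis $\rhd(X,x)=\dim(X,x)$ enters; I expect this to be the main obstacle. In the theory of rectified homotopical depth developed by Hamm and L\^e, the condition $\rhd(X,x)\ge d$ is precisely a maximal-connectivity statement for generic sections and local Milnor fibres, and (b) is essentially its specialization to a function with an isolated critical point in the stratified sense (see \cite{Hamm-Le-Handbook}). One concrete way to run the argument is by induction on $d$, the cases $d\le2$ being clear: choose a linear form $\ell$ on the ambient space generic enough that $\{\ell=0\}$ is transverse to every stratum of a Whitney stratification of $X$, so that $X'=X\cap\{\ell=0\}$ is a hypersurface in $X$ and hence again a Milnor space, now of dimension $d-1$, with $f|_{X'}$ still having an isolated stratified critical point at $x$. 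By the induction hypothesis the Milnor fibre $F'$ of $f|_{X'}$ is a bouquet of $(d-2)$-spheres, in particular $(d-3)$-connected. By the attaching theorem for a generic hyperplane section via the relative polar curve $\Gamma_\ell(f,x)$ (in the spirit of Section \ref{polar}, compare \cite{Trang1975}), $F$ is homotopy equivalent to $F'$ with finitely many cells of dimension $d-1$ attached; this already forces $\widetilde H_i(F;\Z)=0$ for $i\le d-3$, and the remaining point — that the attaching maps of the new $(d-1)$-cells generate $\pi_{d-2}(F')$, i.e. that $H_{d-2}(F;\Z)=0$ — is exactly what maximality of the rectified homotopical depth guarantees at each step. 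Feeding this back closes the induction and, together with (a), finishes the proof.
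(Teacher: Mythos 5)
The paper does not prove Theorem~\ref{Hamm-Le}: it quotes it from the Hamm--L\^e survey (\cite[Theorem 9.5.4]{Hamm-Le-Handbook}) and uses it as a black box in the applications section, so there is no in-paper argument for you to be measured against.

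Judged on its own terms, the outline has the right shape (reduce to a CW-dimension bound plus $(d-2)$-connectivity and finish by Hurewicz and Whitehead), and fact~(a) is unproblematic: the Milnor fibre is Stein, and Hamm's theorem on the homotopy type of Stein spaces gives the dimension bound. The gap is in fact~(b). You invoke an ``attaching theorem for a generic hyperplane section via the relative polar curve,'' asserting that $F$ is homotopy equivalent to $F'$ with finitely many $(d-1)$-cells attached, and then say that the resulting vanishing $H_{d-2}(F)=0$ ``is exactly what maximality of the rectified homotopical depth guarantees.'' But for singular $X$ neither half of that is available as a lemma. L\^e's attaching theorem in \cite{Trang1975} is proved for $X=\C^{n+1}$; in the stratified setting the critical points of $\ell|_F$ need not all lie on the top stratum (the polar curve $\Gamma_\ell(f,\mathcal S,x)$ of Section~\ref{polar} has a component for \emph{every} stratum, and one-dimensional strata are entirely contained in it), and the local Morse data at a critical point on a lower-dimensional stratum is not a $(d-1)$-cell but a cone on a complex link. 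Showing that these normal contributions do not spoil the top-degree attachment, and showing that the resulting attaching maps kill $\pi_{d-2}(F')$, is precisely where the hypothesis $\rhd(X,x)=\dim(X,x)$ has to be fed in through a local Lefschetz-type argument; that is the substance of Hamm and L\^e's proof, not a corollary of it. As written, the proposal therefore assumes in step~(b) something essentially equivalent to the theorem. The inductive frame (generic hyperplane section, $X\cap\{\ell=0\}$ again a Milnor space by property~(3) of Milnor spaces, induction on dimension) is the right scaffolding, but to make it a proof you would need to open the black box and run the stratified Morse theory of $\ell|_F$ along the full (stratified) polar curve, using the definition of $\rhd$ to control the normal Morse data at each stratum.
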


\begin{corollary}\label{coro: finm2 hamm le} With the hypothesis and notation of \cref{Hamm-Le}, if $f\in\mathfrak{m}^2_{X,x}$, then the trace of the 
induced map $h_*\colon H_{n-1}(F;\mathbb Z)\to H_{n-1}(F;\mathbb Z)$ by the monodromy $h:F\to F$ is $(-1)^n$, where $n=\dim(X,x)$.
\end{corollary}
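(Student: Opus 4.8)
The plan is to read off the trace from the Lefschetz number of the monodromy, which we already know vanishes. First I would invoke Theorem~\ref{Hamm-Le}: the general fibre $F$ has the homotopy type of a bouquet of $(n-1)$-spheres, say of $\mu$ of them, where $\mu$ is the Milnor number of $f$ at $x$. Consequently $F$ is connected (being a wedge of positive-dimensional spheres, with $n\ge 2$), so $H_0(F;\Z)\cong\Z$; the group $H_{n-1}(F;\Z)$ is free of rank $\mu$; and $H_i(F;\Z)=0$ for every $i\notin\{0,n-1\}$. In particular the homology of $F$ is torsion free, so the trace of $h_*$ on $H_i(F;\Z)$ coincides with its trace on $H_i(F;\mathbb Q)$, and the Lefschetz number $\Lambda(h)$ of $h$ may be computed with integer coefficients.

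Next I would write down the Lefschetz--Hopf formula. Since $h\colon F\to F$ is a homeomorphism of the connected space $F$, it induces the identity on $H_0(F;\Z)$, which contributes $1$, and the only other possibly nonzero contribution comes from degree $n-1$. Hence
\[
\Lambda(h)=\sum_{i\ge 0}(-1)^i\,\tr\bigl(h_*\colon H_i(F;\Z)\to H_i(F;\Z)\bigr)=1+(-1)^{n-1}\,\tr\bigl(h_*\colon H_{n-1}(F;\Z)\to H_{n-1}(F;\Z)\bigr).
\]
Now I would use the hypothesis $f\in\mathfrak m_{X,x}^2$: by Theorem~\ref{A'Campo} (equivalently, by Theorem~\ref{nofixedpoints} together with the Lefschetz fixed point theorem, since a fixed-point-free geometric monodromy has Lefschetz number zero), we have $\Lambda(h)=0$. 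Substituting into the displayed identity gives $(-1)^{n-1}\,\tr(h_*)=-1$, and multiplying through by $(-1)^{n-1}$ yields $\tr\bigl(h_*\colon H_{n-1}(F;\Z)\to H_{n-1}(F;\Z)\bigr)=-(-1)^{n-1}=(-1)^n$, as claimed.

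There is no serious obstacle here: the statement is a formal consequence of Theorem~\ref{Hamm-Le}, which pins down the shape of $H_*(F;\Z)$, and of the vanishing of the Lefschetz number. The only points that require a little care are that $F$ is connected, so that the degree-$0$ term of the Lefschetz sum is exactly $1$, and that $H_*(F;\Z)$ is free, so that the integral trace in the statement agrees with the rational trace entering the Lefschetz computation.
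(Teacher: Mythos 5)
Your proof is correct and is exactly the intended argument: the paper states this corollary without proof because it is, as you say, a formal consequence of Theorem~\ref{Hamm-Le} (which forces $H_*(F;\Z)$ to be concentrated in degrees $0$ and $n-1$, free in both) combined with the vanishing of the Lefschetz number guaranteed by Theorem~\ref{nofixedpoints} (or Theorem~\ref{A'Campo}). One small remark: you explicitly restrict to $n\ge 2$ in the connectivity step. For $n=1$ the fibre is a bouquet of $0$-spheres, hence disconnected, and the literal trace on unreduced $H_0$ of a fixed-point-free monodromy is $0$, not $(-1)^1=-1$; the stated formula in that edge case holds only after passing to reduced homology $\tilde H_0$, which is how the corollary is used in Theorem~\ref{coales}. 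Flagging that caveat (or simply noting that $n\ge 2$ is assumed) would make the write-up airtight, but the core argument is the right one and agrees with the paper.
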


\begin{definition} With the hypothesis and notation of \cref{Hamm-Le}, the number of spheres of $F$ is 
called the \emph{Milnor number} of $f$ and is denoted by $\mu(f)$. We say that the critical point is \emph{non-trivial} if $\mu(f)>0$.
\end{definition}

We want to generalize the non-coalescing theorem of the second author for families of hypersurfaces $\left\{H_t\right\}_{t\in\C}$ in \cite{Trang1973}. As we want to generalize it in the setting of fibers inside Milnor spaces, we obviously need a convenient concept of \textsl{family of Milnor spaces} and its corresponding \textsl{family of complex functions that give the equations of the fibers}. This is covered in \cref{strat-def}.

Consider a germ of complex analytic space $(X_0,x_0)$. Let $f_0\colon(X_0,x_0)\to(\C,0)$ be a germ of holomorphic function. Let
$X_0$ be a small representative of $(X_0,x_0)$ and let ${\mathcal S}$ be a Whitney stratification of $X_0$. We assume that
a representative $f_0$ has an isolated critical point in the stratified sense at $x_0$. We define:

\begin{definition}\label{strat-def} A \emph{stratified deformation} of $(X_0,x_0)$ is a flat deformation 
$\pi\colon({\mathfrak X}, x_0)\to(\C,0)$, where $\mathfrak X$ is an analytic space with an 
analytic Whitney stratification such that, for a representative $\pi$:
\begin{enumerate} 
\item $\pi^{-1}(0)=X_0$ as analytic spaces,
\item $\pi$ has isolated critical points in the stratified sense,
\item the stratification of $X_0$ coincides with the induced stratification by $\mathfrak X$ on $\pi^{-1}(0)$.
\end{enumerate}

Given a stratified deformation $\pi\colon({\mathfrak X}, x_0)\to(\C,0)$, a \emph{stratified unfolding} of a germ $f_0$ as above is a holomorphic map 
$\mathcal F\colon({\mathfrak X},x_0)\to(\C\times\C,0)$ such that $p_1\circ \mathcal F|_{X_0}=f_0$ and 
$p_2\circ\mathcal F=\pi$, where $p_i:\C\times\C\to\C$, $i=1,2$, is the $i$th-projection.
\end{definition}

We can always assume that $\mathfrak X$ is embedded in $\C^{N}\times\C$ and choose coordinates in such a way 
that $\pi(x,t)=t$. So, we can write the stratified unfolding as $\mathcal F(x,t)=(f_t(x),t)$. For each $t\in\C$, we have 
a function $f_t\colon X_t\to\C$, where $X_t=\pi^{-1}(t)$. Here we consider in $X_t$ the stratification induced by 
$\mathfrak X$ and denote by $\Sigma(f_t)$ the set of stratified critical points of $f_t$. 

\begin{example} We consider the function $f_0:(X_0,0)\to(\C,0)$, where $X_0$ is the surface in $\C^3$ given by $z^2-y(x^2+y)^2=0$ and $f_0(x,y,z)=x$. The stratification in $X_0$ is $\big\{\{0\},C_0-\{0\},X_0\big\}$, where $C_0$ is the curve $z=x^2+y=0$.  It is easy to see $f_0$ has only one critical point in the stratified sense at the origin and that $\mu(f_0)=1$ (see \cref{fig:f0}).

\begin{figure}[ht]
	\centering
		\includegraphics[width=0.5\textwidth]{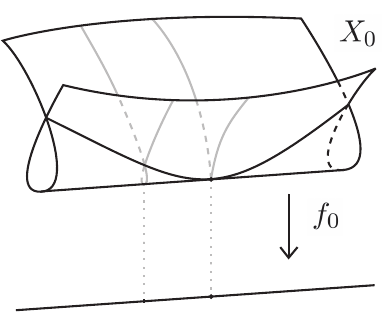}
	\caption{The function $f_0$ with a critical point}
	\label{fig:f0}
\end{figure}

Now we define a stratified deformation $\pi:(\mathfrak X,0)\to(\C,0)$ and a stratified unfolding $\mathcal F\colon({\mathfrak X},0)\to(\C\times\C,0)$ as follows: $\mathfrak X$ is the hypersurface in $\C^3\times\C$ with equation $z^2-y(x^2+y+t)^2=0$, $\pi(x,y,z,t)=t$ and $\mathcal F(x,y,z,t)=(x,t)$. The stratification in $\mathfrak X$ is $\big\{\{0\}, \mathcal D\setminus\{0\},\mathcal C\setminus\mathcal D,\mathfrak X\setminus\mathcal C\big\}$, where $\mathcal D$ is the curve $z=y=x^2+t=0$ and $\mathcal C$ is the surface $z=x^2+y+t=0$. Again it is not difficult to check that all conditions of \cref{strat-def} hold.

For $t\ne0$, $f_t:X_t\to\C$ has two critical points in the stratified sense at $\big(\pm\sqrt{-t},0,0\big)$, which are the points in $D_t:=X_t\cap\mathcal D$. We see that $f_t$ has also Milnor number $1$ at each critical point $\big(\pm\sqrt{-t},0,0\big)$ (see \cref{fig:ft}).

\begin{figure}[ht]
	\centering
		\includegraphics[width=0.57\textwidth]{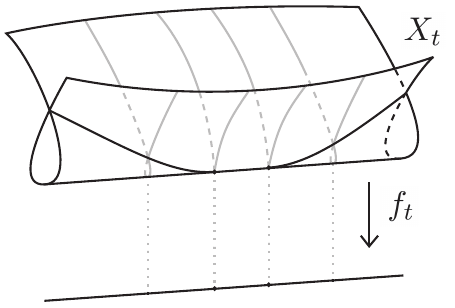}
	\caption{The function $f_t$ with two critical points}
	\label{fig:ft}
\end{figure}

\end{example}

The following theorem could seem very restrictive due to the length of the hypotheses. On the contrary, its statement only says that, with a \textsl{general notion of family of ambient spaces} ($\mathfrak X$) and a \textsl{general notion of equation of the fibers} ($\mathcal F$), if it happens what we have proven in some cases (\cref{nofixedpoints} or \cref{coro: finm2 hamm le}), then we have a non-coalescing result.

\begin{theorem}\label{coales} Let $f_0\colon(X_0,x_0)\to(\C,0)$ be a function with a non-trivial isolated critical point and let 
$\mathcal F\colon(\mathfrak X,x_0)\to(\C\times\C,0)$ be a stratified unfolding of $f_0$ such that $\mathfrak X$ is a Milnor space. We set $Y_t=f_t^{-1}(0)$ 
and assume that for any $x\in\Sigma(f_t)\cap Y_t$, the trace of the local monodromy of $f_t$ at $x$ in dimension $n-1$ is $(-1)^n$, where $\dim (X_0,x_0)=n$.
Let $B_0$ be a Milnor ball for $f_0$ at $x_0$ and assume that for any $t\in\mathbb C$ small enough,
\begin{equation}\label{mu-constant}
\sum_{x\in \Sigma(f_t)\cap Y_t\cap B_0}\mu_x(f_t)=\mu_{x_0}(f_0),
\end{equation}
where $\mu_x(f_t)$ is the Milnor number of $f_t$ at $x$.
Then $Y_t\cap B_0$ contains a unique non-trivial critical point $x$ of $f_t$.
\end{theorem}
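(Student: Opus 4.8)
The plan is to follow Lê's argument in \cite{Trang1973}, with the trace hypothesis on $Y_t$ replacing A'Campo's theorem. Write $\mathcal F(x,t)=(f_t(x),t)$, $X_t=\pi^{-1}(t)$ and $n=\dim(X_0,x_0)$; I will take $n\ge 2$, the case $n=1$ being analogous after passing to reduced homology in degree $0$. Since $X_t$ and $Y_t=f_t^{-1}(0)$ are hypersurfaces in the Milnor space $\mathfrak X$, they are again Milnor spaces, so Theorem \ref{Hamm-Le} applies to $f_t\colon X_t\to\C$ at each of its isolated stratified critical points. After shrinking the representatives, for $|t|$ small the set $\Sigma(f_t)\cap B_0$ is finite, lies in the interior of $B_0$, and converges to $x_0$ as $t\to 0$; in particular all the corresponding critical values tend to $0$.

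First I would fix a small regular value $\delta'>0$ of $f_0$ and put $F_t=f_t^{-1}(\delta')\cap B_0$. For $|t|$ small, $\delta'$ is a regular value of $f_t$ and $\partial B_0$ stays transverse to the fibres of $f_t$ over $\partial D_{\delta'}$, so $\mathcal F$ restricted to $\mathcal F^{-1}(\partial D_{\delta'}\times D_\rho)\cap(B_0\times D_\rho)$ is a proper stratified submersion, hence a locally trivial fibration by the Thom--Mather first isotopy lemma. Therefore $F_t$ is diffeomorphic to the Milnor fibre $F_0$ of $f_0$ at $x_0$ and the geometric monodromy $h_t$ of $f_t$ around $\partial D_{\delta'}$ is isotopic to the local monodromy $h_0$ of $f_0$ at $x_0$. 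By Theorem \ref{Hamm-Le}, $F_0$ is a bouquet of $\mu_{x_0}(f_0)$ spheres of dimension $n-1$; applying the trace hypothesis at $x_0\in\Sigma(f_0)\cap Y_0$ and the isotopy invariance of the Lefschetz number,
\[
\Lambda(h_t)=\Lambda(h_0)=1+(-1)^{n-1}\tr\bigl(h_{0*}\mid H_{n-1}(F_0)\bigr)=1+(-1)^{n-1}(-1)^{n}=0.
\]

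Next I would compute $\Lambda(h_t)$, for $t\neq 0$, by localising at the critical points. Let $y_1,\dots,y_m$ be the critical points of $f_t$ in $B_0$, choose small pairwise disjoint Milnor balls $B_{y_r}$ around them, and set $F_{y_r}=F_t\cap B_{y_r}$, $L_{y_r}=F_t\cap\partial B_{y_r}$. Since $f_t$ is submersive away from the $y_r$, a partition-of-unity construction of the kind used in the proof of Theorem \ref{nofixedpoints} (cf. Corollary \ref{lifting}) produces $h_t$ so that it equals the identity on the closure $F'$ of $F_t\setminus\bigcup_r B_{y_r}$ and restricts on each $B_{y_r}$ to a geometric monodromy $h_{y_r}$ of $f_t$ at $y_r$. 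Additivity of the Lefschetz number over the $h_t$-invariant closed cover $F_t=F'\cup\bigcup_r F_{y_r}$ (whose pairwise intersections are the $L_{y_r}$), together with the corresponding formula for $\chi$, gives
\[
\chi(F_t)-\Lambda(h_t)=\sum_{r=1}^{m}\bigl(\chi(F_{y_r})-\Lambda(h_{y_r})\bigr).
\]
If $y_r\in Y_t$ then $F_{y_r}$ is a bouquet of $\mu_{y_r}(f_t)$ spheres of dimension $n-1$ and $\Lambda(h_{y_r})=0$ by the trace hypothesis, so its term equals $1+(-1)^{n-1}\mu_{y_r}(f_t)$. If $y_r\notin Y_t$ then, since the vanishing cycles at the distinct $y_r$ are linearly independent in $H_{n-1}(F_t)$ (semicontinuity of the Milnor number), $\sum_r\mu_{y_r}(f_t)\le\dim H_{n-1}(F_t)=\mu_{x_0}(f_0)$; combined with \eqref{mu-constant} this forces $\mu_{y_r}(f_t)=0$, so $F_{y_r}$ is contractible and its term is $1-1=0$. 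Substituting $\chi(F_t)=\chi(F_0)=1+(-1)^{n-1}\mu_{x_0}(f_0)$, $\Lambda(h_t)=0$ and \eqref{mu-constant} into the displayed identity yields
\[
1+(-1)^{n-1}\mu_{x_0}(f_0)=\#\bigl(\Sigma(f_t)\cap Y_t\cap B_0\bigr)+(-1)^{n-1}\mu_{x_0}(f_0),
\]
so $\Sigma(f_t)\cap Y_t\cap B_0$ reduces to a single point $x$, which is non-trivial because $\mu_x(f_t)=\mu_{x_0}(f_0)>0$.

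The hard part is the bookkeeping with the critical points of $f_t$ that leave the zero fibre: the argument needs them to carry vanishing Milnor number, which I would obtain from the semicontinuity of the Milnor number on Milnor spaces (equivalently, from the injectivity on $H_{n-1}$ of the map induced by the inclusion of the disjoint local Milnor fibres into $F_t$) together with \eqref{mu-constant}. A more routine point is the construction of the geometric monodromy $h_t$ realising the prescribed local monodromies on the $B_{y_r}$ and the identity on $F'$, and the verification of additivity of the Lefschetz number over the resulting invariant cover; these are handled by the vector-field lifting techniques of the earlier sections. Finally, it is worth recording that the trace hypothesis is imposed rather than deduced from Theorem \ref{nofixedpoints} precisely because $f_t$ may fail to lie in $\mathfrak m^2$ at its critical points on $Y_t$, as Example \ref{triple:ex} shows.
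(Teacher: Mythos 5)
Your overall strategy — localize the monodromy at the critical points and compare traces — is the same as the paper's, and your closing remark about why the trace hypothesis must be imposed rather than deduced from Theorem \ref{nofixedpoints} is a genuinely useful observation. But there is a real gap in the localization step, and it is precisely the point the paper's choice of radius is designed to handle.

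You fix the regular value $\delta'$ once and for all (a Milnor radius for $f_0$) and then, for $t\neq 0$, pick small pairwise disjoint Milnor balls $B_{y_r}$ around the critical points of $f_t$ and set $F_{y_r}=F_t\cap B_{y_r}=f_t^{-1}(\delta')\cap B_{y_r}$. The trouble is that as $t\to 0$ the points $y_r$ coalesce towards $x_0$, so disjointness forces the radii of the $B_{y_r}$ to shrink; but then the range of the Milnor fibration of $f_t$ at $y_r$ inside $B_{y_r}$ also shrinks, and for $|t|$ small the fixed $\delta'$ will fall outside that range. In that regime $f_t^{-1}(\delta')\cap B_{y_r}$ is not the local Milnor fibre — indeed if the $B_{y_r}$ are taken small enough it is empty, which would make your cover $F_t=F'\cup\bigcup_r F_{y_r}$ degenerate to $F'=F_t$ and would force $h_t$ to be the identity, contradicting $\Lambda(h_t)=0$. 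So the claim "$F_{y_r}$ is a bouquet of $\mu_{y_r}(f_t)$ spheres'' and the partition-of-unity construction producing $h_t$ with the stated local behaviour are not justified at a fixed $\delta'$. The paper avoids this by letting the radius depend on $t$: it introduces $\eta'=\eta'(t)$ with $(s,t)\notin\Delta$ for $0<|s|<\eta'$, takes the loop at radius $|s|<\eta'$ (chosen inside the Milnor range of every $B_i$), and identifies the resulting monodromy with that of $f_0$ at $x_0$ through the fibration over $(D_\eta\times D_\rho)-\Delta$. With this choice only the critical value $0$ is enclosed, $f_t^{-1}(s)\cap B_i$ is genuinely the local Milnor fibre of $f_t$ at $x_i$, and the Mayer--Vietoris/trace-additivity computation goes through cleanly.

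The shrinking of the radius also disposes of the critical points of $f_t$ off $Y_t$: they simply are not enclosed by the small loop and never enter the argument. Your alternative treatment of them — semicontinuity of the total Milnor number, i.e.\ injectivity of the sum of local $H_{n-1}$'s into $H_{n-1}(F_t)$ — is a reasonable-looking auxiliary claim, but in the Milnor-space setting it is not something the paper establishes and would itself require proof (via the Morse-theoretic content of Theorem \ref{Hamm-Le}); it is an extra ingredient the paper's argument makes unnecessary. To repair your proof, replace the fixed $\delta'$ with a radius $\eta'(t)$ small enough to exclude the nonzero critical values of $f_t$ and to lie in the Milnor range of every $B_{y_r}$ with $y_r\in Y_t$, and check the identification of the resulting monodromy with $h_0$ through the fibration $\mathcal F$ as in the paper; after that your computation with $\chi$ and $\Lambda$ coincides with the paper's Mayer--Vietoris plus additivity-of-trace argument.
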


\begin{proof} 
Denote by $\Sigma(\mathcal F)$ the set of stratified critical points of $\mathcal F$ and assume that $\dim (X_0,x_0)=n>2$.
It follows that $(x,t)\in\Sigma(\mathcal F)$ if and only if $x\in \Sigma(f_t)$. Since the stratification of $\mathfrak X$ is analytic, $\Sigma(\mathcal F)$ is also analytic. On one hand, we have that
\[
\dim\Sigma(\mathcal F)\cap\{t=0\}=\dim\Sigma(f_0)=0
\] 
and thus, $\dim\Sigma(\mathcal F)\le 1$. On the other hand, by \eqref{mu-constant}, we obtain that
\[
\Sigma(\mathcal F)\cap\{t=t_0\}=\Sigma(f_{t_0})\ne\emptyset,
\] 
for $t_0\ne0$, so $\dim\Sigma(\mathcal F)=1$. Moreover, 
$\mathcal F^{-1}(0)\cap\Sigma(\mathcal F)=\{0\}$, hence its image $\Delta=\mathcal F\big(\Sigma(\mathcal F)\big)$ is also analytic 
of dimension $1$ in $(\C\times\C,0)$ by Remmert's proper map theorem.

We fix a small enough open polydisc $D_\eta\times D_\rho$ in $\C\times\C$ such that the restriction
\begin{equation}\label{fibrationF}
\mathcal F: (B_0\times D_\rho) \setminus \mathcal F^{-1}(\Delta)\to (D_\eta\times D_\rho) \setminus \Delta
\end{equation}
is a proper stratified submersion and such that $\Delta\cap(D_\eta\times\{0\})=\{0\}$. By the Thom-Mather 
first isotopy lemma, \eqref{fibrationF} is a locally $C^0$-trivial fibration. 
Given $s\in D_\eta\setminus\{0\}$, we have $(s,0)\in (D_\eta\times D_\rho) \setminus \Delta$ and hence the fibre,
\[
\mathcal F^{-1}(s,0)\cap (B_0\times D_\rho)=(f_0^{-1}(s)\cap B_0)\times\{0\},
\] 
coincides with the general fibre of $f_0$.

Let $t\in D_\rho$ and assume that $\Sigma(f_t)\cap f_t^{-1}(0)\cap B_0=\{x_1,\dots,x_k\}$. For each $i=1,\dots,k$, 
we take a Milnor ball $B_i$ for $f_t$ at $x_i$ such that $B_i$ is contained in the interior of $B_0$ and 
$B_i\cap B_j=\emptyset$ if $i\ne j$. Now we choose $0<\eta'<\eta$ such that for all $s$, with $0<|s|<\eta'$, $(s,t)\notin \Delta$.

Fix a point $s\in D_{\eta'}$ and consider the loop $\gamma(\theta)=se^{i \theta}$, $\theta\in[0,2\pi]$. 
This loop induces a monodromy $h:f_t^{-1}(s)\cap B_0\to  f_t^{-1}(s)\cap B_0$ which coincides, up to isotopy, 
with the geometric monodromy of $f_0$ at $x_0$. Moreover, by adding the boundaries of the balls $B_i$ as strata in 
the domain of \eqref{fibrationF}, we can assume that:
\begin{enumerate}
\item $h(f_t^{-1}(s)\cap B_i)=f_t^{-1}(s)\cap B_i$ and $h_i=h|_{f_t^{-1}(s)\cap B_i}$ is the monodromy of $f_t$ at $x_i$, for each $1=1,\dots,k$;
\item $h$ is the identity outside the interior of $B_1\cup\dots\cup B_k$.
\end{enumerate}

Let $U=f_t^{-1}(s)\cap\left(B_0 \setminus \bigcup_{i=1}^k\mathring B_i\right)$ and $V=f_t^{-1}(s)\cap\bigcup_{i=1}^kB_i$. 
By considering the Mayer-Vietoris sequence of the pair $(U,V)$ we get a diagram whose rows are exact sequences:
\begin{equation*}
\begin{tikzcd}0\ar[r] &H_{n-1}(U\cap V)\ar[r]\ar[d,"{id}"] &H_{n-1}(U)\oplus H_{n-1}(V)\ar[r]\ar[d,"{id\oplus\big(\oplus_{i=1}^k (h_i)_*\big)}"]  & H_{n-1}(U\cup V)\ar[r]\ar[d,"{h_*}"]&\quad\\
0\ar[r] &H_{n-1}(U\cap V)\ar[r] &H_{n-1}(U)\oplus H_{n-1}(V)\ar[r] & H_{n-1}(U\cup V)\ar[r]&\quad\end{tikzcd}
\end{equation*}
\[
\begin{tikzcd}\ar[r] &H_{n-2}(U\cap V)\ar[r]\ar[d,"{id}"] &H_{n-2}(U)\ar[r]\ar[d,"{id}"]  & 0\\
\ar[r] &H_{n-2}(U\cap V)\ar[r]&H_{n-2}(U)\ar[r]& 0\end{tikzcd}
\]

By the exactness in one of the rows of the sequence we get
\[
a-\left(b+\sum_{i=1}^k \mu_{x_i}(f_t)\right)+\mu_{x_0}(f_0)-c+d=0,
\]
where $a=\rk H_{n-1}(U\cap V)$, $b=\rk H_{n-1}(U)$, $c=\rk H_{n-2}(U\cap V)$ and $d=\rk H_{n-2}(U)$. Our hypothesis implies that
\[
a-b-c+d=0.
\]

Now we use the fact that the trace is additive, which gives:
\[
a-\left(b+\sum_{i=1}^k \tr\big((h_i)_*\big)\right)+\tr(h_*)-c+d=0,
\]
and hence
\[
\sum_{i=1}^k \tr\big((h_i)_*\big)=\tr(h_*).
\]
Again by hypothesis, $\tr\big((h_i)_*\big)=\tr(h_*)=(-1)^n$, for all $i=1,\dots,k$, so necessarily $k=1$.

We can use the same ideas if $n=1$ or $n=2$, with a diagram similar to the one we use above.
\end{proof}

Observe that the hypothesis of having trace equal to $(-1)^n$ at any point can be relaxed to having trace $k\neq 0$ that does not depend on the point. Also, the hypothesis of $\mathfrak{X}$ being a Milnor space is given to assure that the generic fibers of $f_t$ have homology only in middle dimension (by \cref{Hamm-Le}). One can prove something similar if, in general, the non-trivial homology is sparse.

\begin{remark}
The proof of \cref{coales} is an adaptation of the proof given in \cite{Trang1973} for the case $X=\C^n$. A similar argument appears also in the paper \cite{CNOT}, where it is showed that any family of \textsc{icis} with constant total Milnor number has no coalescence of singularities.
\end{remark}

\bibliographystyle{abbrv}
\bibliography{MonodromyBib}

\end{document}